\theoremstyle{plain}
\newtheorem{theorem}{Theorem}
\newtheorem{lemma}[theorem]{Lemma}
\newtheorem{claim}[theorem]{Claim}
\theoremstyle{definition}
\newtheorem{definition}[theorem]{Definition}
\newtheorem{example}[theorem]{Example}
\newcommand{\M}{\mathcal{M}}
\newcommand{\Ell}{\mathcal{L}}
\newcommand{\N}{\mathbb{N}}
\title{Quasi--invariant measures concentrating on countable structures}
\author{Clinton Conley}
\address{Department of Mathematical Sciences, Carnegie Mellon University, 5000 Forbes Ave., Pittsburgh, PA 15213, USA}
\email{clintonc@andrew.cmu.edu}
\author{Colin Jahel}
\address{TU Dresden, 01099 Dresden, Germany}
\email{colin.jahel@tu-dresden.de}
\author{Aristotelis Panagiotopoulos}
\address{Kurt G\"odel Research Center, Faculty of Mathematics,   Universit\"at Wien, Kolingasse 14-16, 1090 Vienna, Austria}
\email{aristotelis.panagiotopoulos@gmail.com}
\thanks{
This research was supported by the
NSF Grants DMS-2154160 and DMS-2154258. C. Jahel was partially supported by  DFG (Project FinHom,
Grant 467967530).}
\begin{document}

\begin{abstract}
Countable  $\mathcal{L}$-structures $\mathcal{N}$  whose isomorphism class supports a permutation invariant probability measure in the logic action have been characterized by  Ackerman--Freer--Patel to be precisely those  $\mathcal{N}$  which have no algebraicity. Here we characterize those countable $\mathcal{L}$-structure  $\mathcal{N}$  whose isomorphism class supports a  quasi--invariant probability measure. These turn out to be precisely those  $\mathcal{N}$ which are not ``highly algebraic" ---we say that  $\mathcal{N}$ is highly algebraic if outside of every finite $F$ there is some $b$ and  a tuple  $\bar{a}$ disjoint from $b$ so that $b$ has a finite orbit under the pointwise stabilizer of $\bar{a}$ in $\mathrm{Aut}(\mathcal{N})$.
As a bi-product of our proof we show that whenever the isomorphism class of $\mathcal{N}$ admits a  quasi-invariant measure, then it admits one with continuous   Radon--Nikodym cocycles. 
\end{abstract}

\maketitle

\section{Introduction}

The past decade has seen the development of the study of invariant random structures. This field was kindled by the work of Petrov and Vershik \cite{PV}   on random triangle-free graphs. A fundamental result in this area is due to Ackerman, Freer and Patel \cite{AFP} who prove that, for a given countable $\mathcal{L}$-structure $\M$,  the existence of an invariant random structure  almost surely isomorphic to $\M$ is equivalent to   $\M$  satisfying  a combinatorial property known as no-algebraicity. 

Recall that for any first order language $\mathcal{L}$ which  contains countably many  relation and function symbols, the collection $\mathrm{Str}(\Ell,\N)$ of all $\mathcal{L}$-structures whose domain is the natural numbers $\mathbb{N}$ can be endowed with a natural Polish topology; see Section \ref{SS:Background:1}. 
For every structure $\mathcal{N}\in \mathrm{Str}(\mathcal{L},\mathbb{N})$, the isomorphism class $[\mathcal{N}]_{\mathrm{iso}}$ of $\mathcal{N}$ is the set:
\[[\mathcal{N}]_{\mathrm{iso}}:=\{\mathcal{M}\in\mathrm{Str}(\mathcal{L},\mathbb{N}) \colon \mathcal{M} \text{ and } \mathcal{N}  \text{ are isomorphic}\}.\]
These  classes can be identified with the orbits under the \emph{logic action} $\mathrm{Sym}(\mathbb{N})\curvearrowright \mathrm{Str}(\mathcal{L},\mathbb{N})$  of the Polish group $\mathrm{Sym}(\mathbb{N})$  of all permutations of $\mathbb{N}$  on $\mathrm{Str}(\mathcal{L},\mathbb{N})$; see Section \ref{SS:Background:2}.

A {\bf random structure} is just a Borel probability measure on $\mathrm{Str}(\Ell,\N)$. A random structure is said to be {\bf invariant} if it is invariant under the logic action. In \cite{AFP} it was shown that the existence of an invariant random structure supported on $[\mathcal{N}]_{\mathrm{iso}}$ is equivalent to $\mathcal{N}$ having {\bf no-algebraicity}:  for any tuple $\bar a$, $\mathrm{acl}(\bar{a},\mathcal{N})=\bar{a}$ where $\mathrm{acl}(\bar{a},\mathcal{N})$ refers to the  points of $\mathcal{N}$ whose orbit is finite under the pointwise stabilizer of $\bar{a}$; see Section \ref{SS:Background:1}. This result was followed by an extensive study of invariant random structures for instance in \cite{AFKrucmanP}, \cite{AFKwiatowskaP}, \cite{CraneTowsner} and by study of invariant random expansions in \cite{JahelJoseph}.

In the present paper we study a generalization of invariant random structures: quasi-invariant random structures. Let $G\curvearrowright X$ be a Borel action on a Polish group on a standard Borel space and recall that a  probability measure $\mu$ on $X$ is {\bf $G$-quasi--invariant},  if for every Borel $B\subseteq X$ and every $g\in G$ we have that $\mu(B)=0 \iff \mu (g\cdot B)=0$. The class of $S_\infty$-quasi-invariant measures have been studied by Nessonov in \cite{Nessonov}.

We call a countable structure $\mathcal{N}$ {\bf quasi--random} if there exists a $\mathrm{Sym}(\mathbb{N})$-quasi--invariant probability measure $\mu$ on $\mathrm{Str}(\mathcal{L},\mathbb{N})$ so that $\mu([\mathcal{N}]_{\mathrm{iso}})=1$.  
\begin{definition}
We say that $\mathcal{N}$ is {\bf highly algebraic} if for every finite $F\subseteq \mathrm{dom}(\mathcal{N})$ there exists a  tuple $\bar{a}=(a_0,\ldots,a_{n-1})$ in $\mathrm{dom}(\mathcal{N})$ so that $\mathrm{acl}(\bar{a},\mathcal{N})\not\subseteq F\cup\{a_0,\ldots,a_{n-1}\}$. 
\end{definition}

The main result of this paper is the following characterization of quasi--random structures in the spirit of  \cite{AFP}.

\begin{theorem}\label{T:main}
A countable structure is quasi--random if and only if it is not highly algebraic.
\end{theorem}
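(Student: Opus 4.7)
The plan is to prove the two implications separately. For the sufficiency direction (not highly algebraic $\Rightarrow$ quasi-random), I reduce to the invariant setting of \cite{AFP} and then deform the resulting measure. For the necessity direction, I obstruct quasi-invariance by exploiting the iterative nature of high algebraicity combined with continuity of the Radon--Nikodym cocycle.

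For sufficiency, let $F \subseteq \mathrm{dom}(\mathcal{N})$ be a finite set witnessing the failure of high algebraicity, so that $\mathrm{acl}(\bar{a}, \mathcal{N}) \subseteq F \cup \bar{a}$ for every tuple $\bar{a}$. Pass to an auxiliary language $\mathcal{L}'$ in which the $\mathcal{L}$-relations involving elements of $F$ are absorbed into new relation symbols of correspondingly smaller arity, and let $\mathcal{N}'$ be the induced $\mathcal{L}'$-structure on $\mathrm{dom}(\mathcal{N}) \setminus F$. The hypothesis on $F$ forces $\mathcal{N}'$ to have no algebraicity: any algebraic witness in $\mathcal{N}'$ pulls back to one in $\mathcal{N}$ with parameters from $F$, and $\mathrm{acl}^{\mathcal{N}}(\bar{a}, F) \subseteq F \cup \bar{a}$ leaves no room outside $\bar{a}$ when $\bar{a}$ lies in $\mathrm{dom}(\mathcal{N}) \setminus F$. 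By \cite{AFP} there is a $\mathrm{Sym}(\mathrm{dom}(\mathcal{N}) \setminus F)$-invariant probability measure $\nu$ on $[\mathcal{N}']_{\mathrm{iso}}$. Lift $\nu$ to a probability measure $\tilde{\nu}$ on $\mathrm{Str}(\mathcal{L}, \mathbb{N})$ concentrated on those $\mathcal{M} \in [\mathcal{N}]_{\mathrm{iso}}$ in which $F$ sits at its original position, so that $\tilde{\nu}$ is invariant under the pointwise stabilizer of $F$ in $\mathrm{Sym}(\mathbb{N})$. Finally, average $\tilde{\nu}$ over the countably many placements of $F$ using strictly positive probability weights $(p_i)$, setting $\mu = \sum_i p_i (\sigma_i)_* \tilde{\nu}$ where $\{\sigma_i\}$ are coset representatives of the pointwise stabilizer of $F$ in $\mathrm{Sym}(\mathbb{N})$. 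Positivity of the weights delivers $\mathrm{Sym}(\mathbb{N})$-quasi-invariance, and since distinct placements of $F$ produce clopen pieces of the support, the Radon--Nikodym cocycle is locally constant in the group variable and hence continuous.

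For necessity, assume $\mathcal{N}$ is highly algebraic and, for contradiction, that $\mu$ is a $\mathrm{Sym}(\mathbb{N})$-quasi-invariant probability measure on $[\mathcal{N}]_{\mathrm{iso}}$; by the bi-product established alongside the sufficiency construction, we may assume the Radon--Nikodym cocycle $c$ is continuous in the group variable. The plan is to apply high algebraicity iteratively: starting from any finite $F_0 \subseteq \mathrm{dom}(\mathcal{N})$, produce a tuple $\bar{a}^{(1)}$ and a witness $b^{(1)} \in \mathrm{acl}(\bar{a}^{(1)}) \setminus (F_0 \cup \bar{a}^{(1)})$ of finite orbit size $k_1$ under the pointwise stabilizer of $\bar{a}^{(1)}$; set $F_1 = F_0 \cup \bar{a}^{(1)} \cup \{b^{(1)}\}$ and iterate to produce an infinite sequence $(\bar{a}^{(n)}, b^{(n)})_{n \geq 1}$. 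Passing to an infinite subsequence realizing a common type (via Ramsey), each finite-orbit constraint yields a pigeonhole bound on the $\mu$-measure of cylinder events placing $b^{(n)}$ at specified locations, while quasi-invariance together with cocycle continuity forces uniformly positive measure across infinitely many such disjoint events; the accumulation contradicts $\mu$ being a probability.

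The main obstacle is the necessity direction: invariance rules out finite orbits through a simple ``no uniform probability on $\mathbb{N}$'' argument, but quasi-invariance accommodates non-uniform positive probabilities on countable sets, so the proof of \cite{AFP} does not port over directly. The crux is to combine the iterative abundance of algebraic incidences---exactly what strengthens ``has algebraicity'' to ``highly algebraic''---with the continuous cocycle structure of $\mu$ (itself a bi-product requiring proof) to extract a quantitative contradiction from the probability normalization.
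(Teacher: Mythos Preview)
Your sufficiency argument is essentially the paper's: absorb the finite exceptional set $F$ into the language, observe that the resulting structure on $\mathrm{dom}(\mathcal{N})\setminus F$ has no algebraicity, invoke \cite{AFP} for an invariant measure, and then average over the countably many placements of $F$ with strictly positive weights to obtain quasi-invariance. This is correct and matches Section~4.

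The necessity argument, however, has a genuine gap. You write that ``by the bi-product established alongside the sufficiency construction, we may assume the Radon--Nikodym cocycle $c$ is continuous in the group variable.'' But that bi-product (Theorem~\ref{T:main2}) is a statement about the \emph{non}-highly-algebraic case: it asserts that when $\mathcal{N}$ is not highly algebraic, a quasi-invariant measure with continuous cocycle \emph{exists}. It says nothing about an arbitrary quasi-invariant measure supported on the orbit of a highly algebraic $\mathcal{N}$, which is precisely the hypothetical object you are trying to rule out. You therefore have no license to assume its cocycle is continuous, and without that assumption your pigeonhole/uniformity sketch has no traction: quasi-invariance alone is compatible with arbitrarily small positive masses on the translates of any fixed event, so the step ``cocycle continuity forces uniformly positive measure across infinitely many such disjoint events'' does not go through. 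The Ramsey/iteration outline is also too vague to see what quantitative inequality is actually being claimed.

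The paper's necessity argument avoids cocycles altogether. It shows (Lemma~\ref{L:aris2}) that for any compact $K$ in the orbit there is a family $(g_\alpha)_{\alpha\in 2^{\mathbb{N}}}$ of permutations with the translates $g_\alpha K$ pairwise disjoint. Inner regularity then supplies a compact $K$ of positive measure, quasi-invariance makes every $g_\alpha K$ positive, and uncountably many pairwise disjoint positive-measure sets already contradicts $\sigma$-additivity---no control on Radon--Nikodym derivatives is needed. Building the $g_\alpha$ uses a structural dichotomy for high algebraicity (Lemma~\ref{L:aris1.5}) together with compactness of $K$; this combinatorial core is missing from your proposal.
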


It is to be noted that since, by a result of Becker and Kechris (see \cite{BK96} section 2.7), when $\mathcal{L}$ contains symbols of arbitrarily large arity, the action of $\mathrm{Sym}(\N)$ on $\mathrm{Str}(\Ell,\N)$ is universal, this theorem describes all (up to embedding) $\mathrm{Sym}(\N)$-quasi--invariant actions that admit an orbit of full measure.

Recall that to every quasi-invariant measure $\mu$ on a Borel $G$-space $X$ one can use the Radon-Nikodym derivatives to associate to $\mu$ a measurable cocycle $c_{\mu}\colon G\times X \to \mathbb{R}$.
The following comes as a bi-product of our proof.

\begin{theorem}\label{T:main2}
If $\mathcal{N}\in \mathrm{Str}(\mathcal{L},\mathbb{N})$ is a not highly algebraic structure, then $[\mathcal{N}]_{\mathrm{iso}}$  is the support of a $\mathrm{Sym}(\mathbb{N})$-quasi--invariant probability measure $\mu$  whose  cocycle $c_{\mu}$ induces a continuous map $c_{\mu}(-,\mathcal{N})\colon \mathrm{Sym}(\mathbb{N}) \to \mathbb{R}$ for each $\mathcal{N}\in  \mathrm{Str}(\mathcal{L},\mathbb{N})$.
\end{theorem}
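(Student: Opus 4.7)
My plan is to randomize over the finite algebraic core of $\mathcal{N}$ with a full-support measure on $\mathbb{N}$, then layer an Ackerman--Freer--Patel invariant random structure on top. Continuity of the cocycle comes essentially for free because the only non-invariant ingredient sees $g$ through finitely many coordinates.

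First, I would isolate the canonical core. Let $F:=\mathrm{acl}(\emptyset,\mathcal{N})$. If some $c\in\mathrm{acl}(\bar{a},\mathcal{N})\setminus\bar{a}$ had an infinite $\mathrm{Aut}(\mathcal{N})$-orbit, then translating $\bar{a}$ by infinitely many automorphisms would witness infinitely many algebraic elements outside the corresponding tuples, contradicting the finite witness in the definition of not highly algebraic. Hence $F$ is finite and $\mathrm{acl}(\bar{a},\mathcal{N})\subseteq F\cup\bar{a}$ for every tuple $\bar{a}$. Since $F$ is $\mathrm{Aut}(\mathcal{N})$-invariant, each $\mathcal{M}\in[\mathcal{N}]_{\mathrm{iso}}$ carries a canonical $k$-element subset $F_{\mathcal{M}}:=\mathrm{acl}(\emptyset,\mathcal{M})\subseteq\mathbb{N}$, where $k:=|F|$, and I fix once and for all an enumeration $F=\{a_{1},\dots,a_{k}\}$.

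Next, I reduce to a no-algebraicity setting. Expand $\mathcal{L}$ to $\mathcal{L}^{+}$ by adding, for every $R\in\mathcal{L}$ and every partial assignment $f$ of the argument slots of $R$ into $F$, a relation symbol $R_{f}$ of the residual arity; let $\mathcal{N}^{+}$ be the $\mathcal{L}^{+}$-structure on $\mathrm{dom}(\mathcal{N})\setminus F$ whose interpretation of $R_{f}$ is $R$ with the pinned coordinates assigned by $f$. A direct computation gives $\mathrm{acl}(\bar{b},\mathcal{N}^{+})=\mathrm{acl}(F\cup\bar{b},\mathcal{N})\cap(\mathrm{dom}(\mathcal{N})\setminus F)\subseteq\bar{b}$, so $\mathcal{N}^{+}$ has no algebraicity and the Ackerman--Freer--Patel theorem supplies a $\mathrm{Sym}(\mathbb{N})$-invariant probability measure $\nu^{+}$ on $\mathrm{Str}(\mathcal{L}^{+},\mathbb{N})$ concentrated on $[\mathcal{N}^{+}]_{\mathrm{iso}}$. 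I then fix a full-support probability measure $\sigma$ on $\mathbb{N}$ (e.g.\ geometric) and let $\lambda_{k}$ be the conditional of $\sigma^{\otimes k}$ on injective $k$-tuples. Define $\mu$ as the law of the $\mathcal{L}$-structure $\mathcal{M}$ produced by sampling $(b_{1},\dots,b_{k})\sim\lambda_{k}$, sampling $\mathcal{M}^{+}\sim\nu^{+}$ and transporting it via the order-preserving bijection $\mathbb{N}\to\mathbb{N}\setminus\{b_{1},\dots,b_{k}\}$, and declaring $\mathcal{M}$ the unique $\mathcal{L}$-structure on $\mathbb{N}$ for which $b_{i}\mapsto a_{i}$ together with any $\mathcal{L}^{+}$-isomorphism $\mathcal{M}^{+}\cong\mathcal{N}^{+}$ yields an $\mathcal{L}$-isomorphism $\mathcal{M}\cong\mathcal{N}$. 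This assembly is $\mathrm{Sym}(\mathbb{N})$-equivariant and so $\mu$ is concentrated on $[\mathcal{N}]_{\mathrm{iso}}$.

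Because $\nu^{+}$ is invariant under \emph{all} of $\mathrm{Sym}(\mathbb{N})$, the conditional laws of $\mathcal{M}^{+}$ given $\tilde F=\{b_{1},\dots,b_{k}\}$ realign coherently as $\tilde F$ varies, so the Radon--Nikodym derivative collapses to
\[
\frac{dg_{*}\mu}{d\mu}(\mathcal{M})=\prod_{i=1}^{k}\frac{\sigma(g^{-1}(b_{i}))}{\sigma(b_{i})},
\]
with $(b_{1},\dots,b_{k})$ the canonical enumeration of $F_{\mathcal{M}}$. This expression depends on $g$ only through its inverse on the finite set $F_{\mathcal{M}}$, so $c_{\mu}(-,\mathcal{M})$ is locally constant, hence continuous, on $\mathrm{Sym}(\mathbb{N})$ equipped with the pointwise-convergence topology. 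I expect the main technical obstacle to be the bookkeeping behind this collapse: the two order-preserving bijections $\mathbb{N}\to\mathbb{N}\setminus\tilde F$ and $\mathbb{N}\to\mathbb{N}\setminus g(\tilde F)$ differ by a nontrivial, possibly non-finite, permutation of $\mathbb{N}$, so matching the conditional laws hinges on invariance of $\nu^{+}$ under the full $\mathrm{Sym}(\mathbb{N})$ rather than merely under the pointwise stabilizer of $\tilde F$.
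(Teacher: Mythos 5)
Your proposal is correct and follows essentially the same strategy as the paper's proof of Theorem~\ref{T:main2N}: fix a finite set $F$ witnessing failure of high algebraicity, pass to the pinned language $\mathcal{L}^{+}$ in which the structure induced on $\mathbb{N}\setminus F$ has no algebraicity, invoke Ackerman--Freer--Patel there, and mix over the position of $F$ with a fully supported measure, verifying quasi-invariance by transporting the invariant measure between different placements of the core. Your explicit identification $F=\mathrm{acl}(\emptyset,\mathcal{N})$ and the closed-form cocycle $\prod_{i}\sigma(g^{-1}(b_{i}))/\sigma(b_{i})$ are details the paper leaves implicit but are both correct, and you rightly flag that the matching of conditional laws hinges on full $\mathrm{Sym}(\mathbb{N})$-invariance of $\nu^{+}$ rather than mere stabilizer-invariance.
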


\subsection*{Outline and sketch of proofs} Section 2 covers the essential definitions and results we need in the proofs. Section 3 is concerned with showing that highly algebraic structures are not quasi-random; the proof consists of constructing continuum many disjoint translates of any compact subset of the orbit of the structure. Section 4 shows the forward direction of Theorem \ref{T:main}. Here we 
uses invariant measures from \cite{AFP} to construct quasi-invariant measures concentrated on the orbit of a given non highly algebraic structure. 
Section 5 collects some useful examples of $\mathcal{L}$-structures in relation to high algebraicity.

\section{Background}
\subsection{Countable $\mathcal{L}$-structures}\label{SS:Background:1}
We follow the usual conventions from model theory; see e.g. \cite{Hodges} for more details.
A countable language $\mathcal{L}$ is a collection of  function symbols and  and relation symbols. Each symbol has a pre-specified arity. An $\Ell$-structure $\mathcal{M}$ is a set $M$ together with an interpretation of $\Ell$, i.e. for each relation symbol $R\in \Ell$ of arity $n(R)\in \mathbb{{N}}$ a subset $R^\mathcal{M}$ of $M^n$, for each  function symbol $f\in \Ell$ of arity $n(f)\in\mathbb{N}$, a map $f^\mathcal{M}\colon M^n \to M$. Given  such $\mathcal{L}$ and  some countable set  $N$, let $\mathrm{Str}(\mathcal{L},N)$ be the space of all $\mathcal{L}$-structures $\mathcal{N}$ with $\mathrm{dom}(\mathcal{N})=N$. As usual, we identify $\mathrm{Str}(\mathcal{L},N)$ with the Polish space
\[\big(\prod_{R \in \mathcal{L}} 2^{\overbrace{N\times\cdots \times N}^{n(R)}} \big)\times \big(\prod_{f \in \mathcal{L}} N^{{\overbrace{N\times\cdots \times N}^{n(f)}}} \big), \]
where the  products are taken over all  relation and function symbols in $\mathcal{L}$, respectively.

Whenever the domain $N$ is fixed, we will denote by $a,b,c,\ldots$ the elements of $N$ and by $\bar{a},\bar{b}, \bar{c}\ldots$ tuples in $N$. By a {\bf tuple} we mean any finite map $\bar{a} \colon n\to N$ where  $n=\{0,\ldots,n-1\}$.  Such tuple has length $n$ and we often denote it by $(a_0,\ldots,a_{n-1})$. and we
denote by $\{\bar{a}\}$ the associated set $\{a_0,\ldots,a_{n-1}\}$ of its elements. We denote by  $\bar{a}\bar{b}$ the concatenation $(a_0,\ldots,a_{n-1},b_0,\ldots,b_{m-1})$ of the tuples $\bar{a}$ and $\bar{b}$.

Let  $\mathcal{N}$ be an $\mathcal{L}$-structure with $\mathrm{dom}(\mathcal{N})=N$ and let $\bar{a}$  in $N$. The {\bf quantifier free type} $\mathrm{qft}(\bar{a},\mathcal{N})$ of $\bar{a}$ in $\mathcal{N}$ is the collection of all quantifier free $\mathcal{L}$-formulas $\varphi(\bar{x})$ with $\mathcal{N}\models \varphi(\bar{a})$. 
We say that $\mathcal{N}$ is {\bf ultrahomogeneous} if $\mathrm{qft}(\bar{a},\mathcal{N})= \mathrm{qft}(\bar{b},\mathcal{N})$ holds if and only if there exists some $g\in \mathrm{Aut}(\mathcal{N})$ with $g\bar{a}=\bar{b}$.
A relation $R\in \mathcal{L}$ {\bf captures $\mathrm{qft}(\bar{a},\mathcal{N})$}  if  for every $\bar{b}$ in $N$ we have that $\mathcal{N}\models R(\bar{b})$ if and only if $\mathrm{qft}(\bar{a},\mathcal{N})= \mathrm{qft}(\bar{b},\mathcal{N})$.

Finally,  we say that  $\mathcal{N}$ is {\bf R--ultrahomogeneous} if it is ultrahomogeneous and for every $\bar{a}$ in $N$ we have that $\mathrm{qft}(\bar{a},\mathcal{N})$ is captured by some relation $R$ in $\mathcal{L}$. Let
 \[\mathrm{Aut}(\mathcal{N})_{\bar{a}}:=\{g\in \mathrm{Aut}(\mathcal{N}) \colon g(a_0)=a_0, \ldots, g(a_{n-1})=a_{n-1} \}\]
 be the pointwise stabilizer of $\bar{a}$. 
The {\bf algebraic closure of $\bar{a}$ in $\mathcal{N}$} is the set
\[\mathrm{acl}(\bar{a},\mathcal{N}):=\{ b \in \mathrm{dom}(\mathcal{N}) \colon \mathrm{Aut}(\mathcal{N})_{\bar{a}}\cdot b \text{ is finite} \}.\]

The following standard fact is going to be used later on.

\begin{lemma}\label{L:Ultrahomogeneous}
For every countable $\mathcal{L}$-structure $\mathcal{N}$ there exists a countable  $\mathcal{L}'\supseteq \mathcal{L}$ and an  $R$--ultrahomogeneous $\mathcal{L}'$-structure $\mathcal{N}'$ whose $\mathcal{L}$-reduct $\mathcal{N}'\upharpoonright \mathcal{L}$ is $\mathcal{N}$ and for  $\bar{a}$ we have that:
\[\mathrm{acl}(\bar{a},\mathcal{N})=\mathrm{acl}(\bar{a},\mathcal{N}')\quad \text{ and }\quad  \mathrm{Aut}(\mathcal{N})_{\bar{a}}=\mathrm{Aut}(\mathcal{N}')_{\bar{a}}.\]
\end{lemma}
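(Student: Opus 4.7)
The plan is to perform the canonical expansion of $\mathcal{N}$ by adding a relation symbol that names each orbit of $\mathrm{Aut}(\mathcal{N})$ on tuples. Concretely, for each $n \in \mathbb{N}$ and each orbit $O$ of $\mathrm{Aut}(\mathcal{N})$ acting diagonally on $N^n$, I would introduce a fresh $n$-ary relation symbol $R_O$, let $\mathcal{L}'$ be $\mathcal{L}$ together with all these new symbols, and define $\mathcal{N}'$ to be the $\mathcal{L}'$-structure whose $\mathcal{L}$-reduct is $\mathcal{N}$ and in which $R_O^{\mathcal{N}'} = O$. Since $N$ is countable, there are only countably many tuples and thus countably many orbits, so $\mathcal{L}'$ is still countable.

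The key identity to verify next is $\mathrm{Aut}(\mathcal{N}') = \mathrm{Aut}(\mathcal{N})$. The inclusion $\mathrm{Aut}(\mathcal{N}') \subseteq \mathrm{Aut}(\mathcal{N})$ is immediate because $\mathcal{N}'$ expands $\mathcal{N}$. For the reverse inclusion, any $g \in \mathrm{Aut}(\mathcal{N})$ sends $\mathrm{Aut}(\mathcal{N})$-orbits to themselves setwise by definition, so it preserves each $R_O$ and hence belongs to $\mathrm{Aut}(\mathcal{N}')$. Once the automorphism groups coincide, the equalities $\mathrm{Aut}(\mathcal{N})_{\bar a} = \mathrm{Aut}(\mathcal{N}')_{\bar a}$ and $\mathrm{acl}(\bar a, \mathcal{N}) = \mathrm{acl}(\bar a, \mathcal{N}')$ follow at once from the definitions of stabilizer and algebraic closure.

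It remains to check that $\mathcal{N}'$ is $R$-ultrahomogeneous. For $R$-ultrahomogeneity of $\mathcal{N}'$: given a tuple $\bar{a}$ in $N$ of length $n$, let $O$ be its $\mathrm{Aut}(\mathcal{N})$-orbit in $N^n$; then by construction $R_O^{\mathcal{N}'}$ holds exactly of those tuples $\bar{b}$ lying in the same orbit as $\bar{a}$, and any two tuples in the same orbit clearly realize the same quantifier-free $\mathcal{L}'$-type, so $R_O$ captures $\mathrm{qft}(\bar a, \mathcal{N}')$. This simultaneously proves ultrahomogeneity: if $\mathrm{qft}(\bar a, \mathcal{N}') = \mathrm{qft}(\bar b, \mathcal{N}')$, then $\bar b$ satisfies $R_{O(\bar a)}$, so $\bar b \in O(\bar a)$, so some $g \in \mathrm{Aut}(\mathcal{N}) = \mathrm{Aut}(\mathcal{N}')$ sends $\bar a$ to $\bar b$; the converse direction is automatic from $g$ being an automorphism.

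There is no serious obstacle; the only point that deserves care is the bookkeeping to make sure that naming $\mathrm{Aut}(\mathcal{N})$-orbits (as opposed to a finer or coarser equivalence) is exactly what is needed both to achieve ultrahomogeneity in the expanded language and to keep the automorphism group and thus algebraic closures unchanged.
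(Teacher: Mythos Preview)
Your proposal is correct and follows exactly the same ``orbit completion'' construction the paper uses: name each $\mathrm{Aut}(\mathcal{N})$-orbit on $N^n$ by a fresh $n$-ary relation symbol. You supply more detail than the paper (which simply cites Hodges and Becker--Kechris and gives a one-sentence sketch), but the argument is identical.
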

\begin{proof} This follows from the usual ``orbit completion" argument which can be find in \cite{Hodges}*{Theorem 4.1.4} or \cite{BK96}*{Section 1.5}. In short, for every $n\in 1,2,\ldots$, and for every orbit $O\subseteq \mathrm{dom}(\mathcal{N})^n$ in the diagonal action $g\cdot(a_0,\ldots,a_{n-1})=(g\cdot a_0,\ldots,g\cdot a_{n-1})$ of $\mathrm{Aut}(\mathcal{N})$ on $\mathrm{dom}(\mathcal{N})^n$, we introduce a $n$-ary relation symbol $R^{n,O}$ and we set $\mathcal{N}'\models R^{n,O}(\bar{a})$ if and only if $\bar{a}\in O$.  
\end{proof}

\subsection{Logic action}\label{SS:Background:2}
Let $\mathrm{Sym}(N)$ be the Polish group of all bijections $g\colon N\to N$. The group operation is the operation of composition $(g\cdot h)(n)=g(h(n))$ and the topology is that of pointwise convergence. The {\bf logic action} is the  continuous action $\mathrm{Sym}(N)\curvearrowright \mathrm{Str}(\mathcal{L},N)$ given by the {\bf push-forward} operation: 
\begin{gather*}
    (g,\mathcal{M})\mapsto g_* \mathcal{M}, \text{where}\\
    g_*\mathcal{M}\models R(a_0,\ldots,a_{n-1}) \iff \mathcal{M}\models R(g^{-1}(a_0),\ldots,g^{-1}(a_{n-1})), \text{ and}\\
    g_*\mathcal{M}\models \big(b=f(a_0,\ldots,a_{n-1}) \big)\iff \mathcal{M}\models \big(g^{-1}(b) =f(g^{-1}(a_0),\ldots,g^{-1}(a_{n-1}))\big).
\end{gather*}

\section{Highly algebraic structures are not quasi--random}

We fix some  $\mathcal{N}\in\mathrm{Str}(\mathcal{L},\mathbb{N})$ and we assume throughout this section that $\mathcal{N}$ is  highly algebraic and  $R$--ultrahomogeneous. We also set  $X:=[\mathcal{N}]_{\mathrm{iso}}$ to be the isomorphism class of $\mathcal{N}$ in $\mathrm{Str}(\mathcal{L},\mathbb{N})$.  
Notice that the ultrahomogeneity of $\mathcal{N}$ implies that $X$ is a $G_{\delta}$ subset of   $\mathrm{Str}(\mathcal{L},\mathbb{N})$ and hence a Polish space. The main goal of this section is to prove the following:

\begin{lemma} \label{L:aris2}
    If $K\subseteq X$ is compact, then there is a family $(g_{\alpha}\colon \alpha\in 2^{\mathbb{N}})$ of permutations $g_{\alpha}\in \mathrm{Sym}(\mathbb{N})$ so that for all $\alpha,\beta\in 2^{\mathbb{N}}$ with $\alpha\neq \beta$ we have that $g_{\alpha} K \cap g_{\beta} K =\emptyset$.
\end{lemma}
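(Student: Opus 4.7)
The plan is a tree construction. Set $B := \{g \in \mathrm{Sym}(\mathbb{N}) : gK \cap K \neq \emptyset\}$; this is closed in $\mathrm{Sym}(\mathbb{N})$ since $K$ is compact. I would build finite partial bijections $p_s$ for $s \in 2^{<\mathbb{N}}$ that nest under the prefix order (interleaved with a back-and-forth so that each limit $g_\alpha := \bigcup_n p_{\alpha \upharpoonright n}$ lies in $\mathrm{Sym}(\mathbb{N})$), while enforcing at each level the condition $[p_s p_t^{-1}] \cap B = \emptyset$ for distinct $s, t \in 2^n$, where $[\cdot]$ denotes the basic clopen neighborhood in $\mathrm{Sym}(\mathbb{N})$. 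Since this condition is inherited by common extensions, it suffices to handle sibling pairs at each step, and in the limit it forces $g_\alpha K \cap g_\beta K = \emptyset$ whenever $\alpha \neq \beta$.

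The heart of the argument is the following sublemma: for every finite $F \subseteq \mathbb{N}$ there exists a finite nontrivial partial injection $r$ with $F \subseteq \mathrm{dom}(r)$, $r|_F = \mathrm{id}_F$, and $[r] \cap B = \emptyset$. Given such $r$, the sibling split at a node $u$ is routine: first extend $p_u$ to $q_1$ by assigning fresh preimages $d_y$ for each $y \in \mathrm{dom}(r) \setminus \mathrm{image}(p_u)$ (so that $\mathrm{image}(q_1) \supseteq \mathrm{dom}(r)$), and then define $q_0$ on the same domain by $q_0(d_y) := r(y)$ (and $q_0 := p_u$ on $\mathrm{dom}(p_u)$); then $q_0 q_1^{-1}$ extends $r$, so $[q_0 q_1^{-1}] \subseteq [r]$ avoids $B$.

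To prove the sublemma I would combine high algebraicity, $R$-ultrahomogeneity (which passes from $\mathcal{N}$ to every $\mathcal{M} \cong \mathcal{N}$), and compactness of $K$. For each $\mathcal{M} \in K$ high algebraicity applied to $F$ yields a tuple $\bar{a}_\mathcal{M}$ in $\mathbb{N}$ and $b_\mathcal{M} \in \mathrm{acl}(\bar{a}_\mathcal{M}, \mathcal{M}) \setminus (F \cup \{\bar{a}_\mathcal{M}\})$, captured by a relation $R_\mathcal{M} \in \mathcal{L}$. The clopens $U_\mathcal{M} := \{\mathcal{M}' : \mathcal{M}' \models R_\mathcal{M}(\bar{a}_\mathcal{M}, b_\mathcal{M})\}$ cover $K$; extract a finite subcover $U_1, \ldots, U_m$ with data $(\bar{a}_i, b_i, R_i)$, and by iterating the witness choice at step $i$ against the forbidden set $F \cup \bigcup_{j<i}\{\bar{a}_j\}$ (with an $\mathrm{Aut}(\mathcal{N})$-translation of a candidate tuple when its orbit is infinite) arrange that each $b_i$ lies outside $F^+ := F \cup \bigcup_j \{\bar{a}_j\}$. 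For each $i$ the continuous map $\mathcal{M} \mapsto \{y : \mathcal{M} \models R_i(\bar{a}_i, y)\}$ from $K$ into $2^{\mathbb{N}}$ is empty off the clopen subset where $\mathrm{qft}(\bar{a}_i, \mathcal{M}) = \mathrm{qft}(\bar{a}_i, \mathcal{N})$ and on that subset lands in the discrete space $\{S \subseteq \mathbb{N} : |S| = k_i\}$ (where $k_i$ is the orbit size), so by compactness its image is finite; hence $T_i := \bigcup_{\mathcal{M} \in K}\{y : \mathcal{M} \models R_i(\bar{a}_i, y)\}$ is finite. With $T := \bigcup_i T_i$, $T^* := T \setminus F^+$, and fresh distinct $c_y \in \mathbb{N} \setminus (F^+ \cup T)$ for $y \in T^*$, define $r$ by $r|_{F^+} = \mathrm{id}$ and $r(y) = c_y$ on $T^*$. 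Given $g \supseteq r$ and $\mathcal{M} \in K$: each $b_j \in T^*$ (so $g$ sends it into $C := \{c_y : y \in T^*\}$), and $b_j \notin F^+ \cup C$, giving $g^{-1}(b_j) \in \mathbb{N} \setminus (F^+ \cup T^*)$, a set disjoint from $T_j \subseteq T$; hence $\mathcal{M} \not\models R_j(\bar{a}_j, g^{-1}b_j)$, so $g\mathcal{M} \not\models R_j(\bar{a}_j, b_j)$, and $g\mathcal{M} \notin U_j$ for every $j$. Thus $g\mathcal{M} \notin K$, proving $[r] \cap B = \emptyset$.

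The main obstacle I foresee is the bookkeeping needed to secure $b_i \notin F^+$ for all $i$ simultaneously: high algebraicity by itself only guarantees $b_i \notin F \cup \{\bar{a}_i\}$, and avoiding the remaining $\{\bar{a}_j\}$'s requires the iterative refinement described above, with care needed in the degenerate case when the $\mathrm{Aut}(\mathcal{N})$-orbit of a candidate tuple is finite (forcing it into $\mathrm{acl}(\emptyset,\mathcal{N})$ and restricting where it may be relocated).
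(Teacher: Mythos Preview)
Your tree scheme and the reduction to the sublemma are essentially the same as the paper's inductive step: the paper too builds finite permutations $\gamma_s$ of initial segments with $\gamma_{s^\frown 0}$ the identity extension, so that $g_1^{-1}g_0$ restricted to $\ell_{s^\frown 0}$ plays exactly the role of your $r$. Your sibling split and back-and-forth are routine, as you say.

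The genuine gap is the one you flag yourself, and your proposed fix does not close it. The problem is not merely bookkeeping. When you ``iterate the witness choice at step $i$'' to push $b_i$ outside $F\cup\bigcup_{j<i}\{\bar a_j\}$, you are replacing $(\bar a_i,b_i)$ by a new tuple of the same type in $\mathcal{M}_i$; but this changes $U_i$, and the other $\mathcal{M}\in K$ that were covered by the old $U_i$ need not lie in the new one. So after each adjustment you must re-cover the uncovered part of $K$, and you give no reason this process terminates. The $\mathrm{Aut}(\mathcal{N})$-translation idea has the same defect: it preserves the type but not the clopen $U_i$. Even in the simplest case (Example~1 in the paper, a perfect matching, with all $\bar a_i$ singletons and all $R_i$ the edge relation), the constraint ``each $b_i$ avoids all $a_j$'' amounts to properly $2$-colouring the multigraph on $\{a_i,b_i\}$ with edges $\{a_i,b_i\}$, which fails once that graph has an odd cycle; avoiding this forces you to change edges, hence change the cover.

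This is precisely the difficulty that the paper's dichotomy (Lemma~\ref{L:aris1.5}) is designed to absorb. The paper applies high algebraicity \emph{once}, to the fixed structure $\mathcal{N}$, after first encoding (via Claim~\ref{Claim:1in}) the finitely many types of $(0,\dots,\ell_s-1)$ realized in $K$ by tuples $\bar c^i$ in $\mathcal{N}$. This yields a single $(\bar a,b)$ disjoint from all the $\bar c^i$, so your clash ``$b_i\in\{\bar a_j\}$'' cannot arise. The price is that one must then transfer this single witness back to every $\mathcal{M}\in K$, and here the two cases diverge: in Case~(2) there are infinitely many copies of $\bar a$ over $b$, so one can locate $\bar a$ beyond the region where the $b$'s live (your ``fix $\bar a$, move $b$'' idea, but with the roles reversed); in Case~(1) the pair $(a,b)$ is mutually algebraic, neither can be pushed away from the other, and the paper instead builds an $R$-free region $Z$ and moves the whole initial block there. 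Your uniform approach tries to do Case~(2) for everything, and Case~(1) is exactly where it breaks.

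One minor point: your argument that $T_i$ is finite via ``the map lands in the discrete space $\{S:|S|=k_i\}$'' is not quite right as stated, since that set is not discrete in the product topology on $2^{\mathbb{N}}$. What is true is that the map $\mathcal{M}\mapsto\{y:\mathcal{M}\models R_i(\bar a_i,y)\}$ is locally constant on the clopen set where $\mathrm{qft}(\bar a_i,\mathcal{M})$ is correct (any $\mathcal{M}'$ agreeing on the finitely many witnesses has the same solution set, by the size bound), hence has finite image by compactness.
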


This lemma directly implies one of the directions of Theorem \ref{T:main}:

\begin{proof}[Proof of $(\Rightarrow)$ direction of Theorem \ref{T:main} from Lemma \ref{L:aris2}]
Let  $\mathcal{N}^*\in  \mathrm{Str}(\mathcal{L}^*,\mathbb{N})$ be some highly algebraic structure and assume towards contradiction that there exists a $\mathrm{Sym}(\mathbb{N})$-quasi--invariant probability measure $\mu$ on $\mathrm{Str}(\mathcal{L}^*,\mathbb{N})$ so that $\mu([\mathcal{N}^*]_{\mathrm{iso}})=1$. Let $\mathcal{L}:=(\mathcal{L}^*)'$ and  $\mathcal{N}:=(\mathcal{N}^*)'$ be the language and the structure provided by Lemma \ref{L:Ultrahomogeneous} and set $X:=[\mathcal{N}]_{\mathrm{iso}}$.

Consider now the map $\pi\colon X \to \mathrm{Str}(\mathcal{L},\mathbb{N})$,  which implements the $\mathcal{L}$-reduct $\mathcal{M}\mapsto \mathcal{M}\upharpoonright \mathcal{L}$.  Then $\pi$ is clearly a  $\mathrm{Sym}(\mathbb{N})$--equivariant Borel isomorphism and hence we can transfer  $\mu$ to a  $\mathrm{Sym}(\mathbb{N})$-quasi--invariant probability measure on $X$ by pushing it forward via $\pi^{-1}$. Since  $\mathcal{N}$ is ultrahomogeneous, we have that $X$ is a Polish space, and Lemma \ref{L:aris2} applies. But probability measures on Polish spaces are regular and hence, there exists a compact $K\subseteq X$ so that $\mu(K)>0$. By quasi-invariance of $\mu$ we have that $\mu(g_{\alpha}K)>0$, for all $\alpha\in 2^{\mathbb{N}}$; where $(g_{\alpha}\colon \alpha\in 2^{\mathbb{N}})$ is as in Lemma \ref{L:aris2}. Hence there is some $\varepsilon>0$ so that $\{\alpha \in 2^{\mathbb{N}}\colon \mu(g_\alpha K)>\varepsilon \}$ is uncountable. But since $g_{\alpha} K \cap g_{\beta} K =\emptyset$ when $\alpha\neq \beta$, $\sigma$-additivity of $\mu$ contradicts that $\mu$ is a probability measure.
\end{proof}

The proof of  Lemma \ref{L:aris2} occupies the rest of this section. Interestingly, there are two distinct ways for a structure $\mathcal{N}$ to exhibit high algebraicity. Each  way contributes to the proof of  Lemma \ref{L:aris2} through a  different argument. In Lemma \ref{L:aris1.5} below we establish this dichotomy--in--behaviour, before  we conclude with the proof of Lemma \ref{L:aris2}.

\subsection{Highly algebraic structures}

Below we will make extensive use of the Neumann's lemma \cite{neumann1976structure}*{Lemma 2.3}. Rephrased  as in \cite{Cam}*{Theorem 6.2}, it  states:

\begin{lemma}[Neumann] \label{Thm:Neu} Let $H$ be a group acting on a countable set $\Omega$ with no finite orbit. Let $\Gamma$ and $\Delta$ be finite subsets of $\Omega$, then there is $h\in H$ such that $h\cdot \Gamma \cap \Delta = \varnothing.$
\end{lemma}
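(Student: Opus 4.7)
The plan is to rephrase the conclusion as the existence of an element of $H$ lying outside a certain finite union of cosets, and to conclude via B.~H.~Neumann's classical coset-covering theorem.

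First, a translation. The condition $h \cdot \Gamma \cap \Delta = \varnothing$ fails exactly when there exist $a \in \Gamma$ and $b \in \Delta$ with $h \cdot a = b$. For each such pair, the solution set $\{h \in H : h \cdot a = b\}$ is either empty (when $b \notin H\cdot a$) or a single left coset $h_{a,b}\,H_a$ of the point stabilizer $H_a := \{h \in H : h \cdot a = a\}$, where $h_{a,b}$ is any element sending $a$ to $b$. Hence the ``bad'' set
\[
B \;:=\; \{h \in H : h\cdot\Gamma \cap \Delta \neq \varnothing\} \;=\; \bigcup_{(a,b) \in \Gamma \times \Delta} \{h \in H : h \cdot a = b\}
\]
is a union of at most $|\Gamma|\cdot|\Delta|$ left cosets of the point stabilizers $H_a$ with $a \in \Gamma$. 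Moreover, by the orbit-stabilizer bijection $H/H_a \leftrightarrow H\cdot a$, the no-finite-orbit hypothesis gives $[H:H_a] = |H\cdot a| = \infty$ for every $a \in \Gamma$, so every coset appearing in $B$ is a coset of a subgroup of \emph{infinite} index.

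The proof then concludes by invoking Neumann's theorem that a group cannot be covered by finitely many cosets of subgroups of infinite index: applied to $B$, this yields $H \setminus B \neq \varnothing$, and any element of this complement is a witness $h$ as required. The only genuine obstacle, should one prefer to avoid a direct citation, is proving the coset-covering theorem in situ; the standard argument proceeds by induction on the number of cosets in the covering, isolating a coset $g_n H_n$ of a subgroup of infinite index, observing that infinitely many distinct cosets of $H_n$ exist but only finitely many occur in the covering, choosing one ``missed'' coset $gH_n$ whose elements must therefore be covered by the remaining terms, and translating appropriately to reduce to a covering by strictly fewer cosets. Countability of $\Omega$ plays no role in the argument and is simply the setting in which the lemma is applied later.
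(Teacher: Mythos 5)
Your argument is correct. Note, however, that the paper does not prove this lemma at all: it is quoted verbatim from \cite{neumann1976structure}*{Lemma 2.3} in the form given in \cite{Cam}*{Theorem 6.2}, so there is no in-paper proof to compare against. What you have written is essentially the classical derivation: the ``bad'' set $B=\{h\in H: h\cdot\Gamma\cap\Delta\neq\varnothing\}$ is a union of at most $|\Gamma|\,|\Delta|$ left cosets of point stabilizers $H_a$, each of infinite index by orbit--stabilizer and the no-finite-orbit hypothesis, and B.~H.~Neumann's coset-covering theorem then forbids $B=H$. All of these steps check out, and you are right that countability of $\Omega$ is irrelevant. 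One small caveat about your optional in-situ sketch of the covering theorem: the induction is most naturally run on the number of \emph{distinct subgroups} occurring (or on the number of cosets of infinite-index subgroups), not on the total number of cosets, since eliminating the cosets of the isolated subgroup $H_n$ requires covering them by translates of the remaining cosets, which can increase the total coset count; as phrased, ``reduce to a covering by strictly fewer cosets'' is not quite what the standard argument delivers. Since you mainly invoke the covering theorem by citation, this does not affect the correctness of your proof of the lemma itself.
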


\begin{lemma}\label{L:aris1.3}
For any   $\mathcal{L}$-structure $\mathcal{M}$,  the following statements are equivalent:
 \begin{enumerate}
     \item  $\mathcal{M}$  is highly algebraic;
     \item for all  $\bar{c}$ there exist $\bar{a}$, $b$ so that $\{\bar{a}\}, \{b\}, \{\bar{c}\}$ are pairwise disjoint and $b\in\mathrm{acl}(\bar{a}\bar{c},\mathcal{M})$; 
      \item for all  $\bar{c}$ there exist  $\bar{a}$, $b$ so that $\{\bar{a}\}, \{b\}, \{\bar{c}\}$ are pairwise disjoint and $b\in\mathrm{acl}(\bar{a},\mathcal{M})$; 
 \end{enumerate}
\end{lemma}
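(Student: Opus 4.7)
The plan is to prove the cyclic chain $(3)\Rightarrow(2)\Rightarrow(1)\Rightarrow(3)$. The first two implications are essentially formal manipulations of tuples, while $(1)\Rightarrow(3)$ is the real content and the main obstacle.

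For $(3)\Rightarrow(2)$, simply observe that $\mathrm{acl}(\bar{a},\mathcal{M})\subseteq\mathrm{acl}(\bar{a}\bar{c},\mathcal{M})$ by monotonicity of algebraic closure, so the witness $b$ from (3) also witnesses (2); the disjointness conditions are identical. For $(2)\Rightarrow(1)$, given a finite $F\subseteq\mathrm{dom}(\mathcal{M})$, enumerate $F$ as a tuple $\bar{c}$ and let $\bar{a},b$ come from (2). The concatenated tuple $\bar{a}^*:=\bar{a}\bar{c}$ satisfies $\{\bar{a}^*\}=\{\bar{a}\}\cup F$, so $F\cup\{\bar{a}^*\}=F\cup\{\bar{a}\}$; the pairwise disjointness in (2) gives $b\notin F\cup\{\bar{a}\}$, while $b\in\mathrm{acl}(\bar{a}^*,\mathcal{M})$. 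This is exactly what (1) demands.

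For $(1)\Rightarrow(3)$, the natural first move is to apply (1) to $F:=\{\bar{c}\}$, obtaining a tuple $\bar{a}_0$ and an element $b_0\in\mathrm{acl}(\bar{a}_0,\mathcal{M})$ with $b_0\notin\{\bar{c}\}\cup\{\bar{a}_0\}$, and then to try to move the configuration $(\bar{a}_0,b_0)$ off $\bar{c}$ by an automorphism via Neumann's lemma. The difficulty is that Neumann's lemma demands no finite orbits, whereas $\mathrm{Aut}(\mathcal{M})$ may have finite orbits on $\mathrm{dom}(\mathcal{M})$ entirely contained in $\{\bar{c}\}$, which would trap some entries of $\bar{a}_0$. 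The fix is a two-step preparation: first, trim $\bar{a}_0$ to the subtuple $\bar{a}_1$ obtained by deleting all entries lying in $\mathrm{acl}(\emptyset,\mathcal{M})$; using the transitivity of $\mathrm{acl}$ one checks $\mathrm{acl}(\bar{a}_1,\mathcal{M})=\mathrm{acl}(\bar{a}_0,\mathcal{M})$, so $b_0\in\mathrm{acl}(\bar{a}_1,\mathcal{M})$ persists. Second, work in $\Omega:=\mathrm{dom}(\mathcal{M})\setminus\mathrm{acl}(\emptyset,\mathcal{M})$, which is $\mathrm{Aut}(\mathcal{M})$-invariant and on which $\mathrm{Aut}(\mathcal{M})$ has no finite orbit by the very definition of $\mathrm{acl}(\emptyset,\mathcal{M})$.

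Two subcases finish the argument. If $b_0\in\mathrm{acl}(\emptyset,\mathcal{M})$ (in particular if $\bar{a}_1$ is empty), then since $b_0\notin\{\bar{c}\}$ the choice $\bar{a}:=\emptyset$ and $b:=b_0$ already witnesses (3). Otherwise $\{\bar{a}_1 b_0\}\subseteq\Omega$, and Neumann's lemma applied to $\mathrm{Aut}(\mathcal{M})\curvearrowright\Omega$ with $\Gamma:=\{\bar{a}_1 b_0\}$ and $\Delta:=\{\bar{c}\}\cap\Omega$ produces $h\in\mathrm{Aut}(\mathcal{M})$ with $h\Gamma\cap\{\bar{c}\}=\emptyset$; setting $\bar{a}:=h\bar{a}_1$ and $b:=hb_0$ concludes, since automorphisms preserve algebraic closure. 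The conceptual content is entirely in the passage to $\Omega$ and in noticing that transitivity of $\mathrm{acl}$ allows the finite-orbit entries of $\bar{a}_0$ to be discarded without weakening the algebraic dependency on which (3) rests.
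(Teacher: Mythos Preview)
Your proof is correct and follows essentially the same strategy as the paper: isolate $\mathrm{acl}(\emptyset,\mathcal{M})$, and then apply Neumann's lemma to the action of $\mathrm{Aut}(\mathcal{M})$ on its complement $\Omega$ to push the witnessing configuration off $\{\bar{c}\}$. The only organizational difference is that you run the nontrivial implication as $(1)\Rightarrow(3)$ rather than the paper's $(2)\Rightarrow(3)$; since $(1)$ already hands you $b_0\in\mathrm{acl}(\bar{a}_0,\mathcal{M})$ with no $\bar{c}$ in the closure, you avoid the paper's extra step of absorbing $g(\bar{c})$ into the new tuple $\bar{a}'$, a mild simplification.
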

\begin{proof}
 It straightforward to see the equivalence between (1) and (2).  It is also clear that (3) implies (2). We prove that (2) implies (3) as well. Assume (2) holds and let $\bar{c}$ be given.

We may assume that $\mathrm{acl}(\emptyset,\mathcal{M})$ is finite since, otherwise, (3) would follow by simply setting  $\bar{a}:=\emptyset$. Let now $\bar{e}$ be a finite tuple enumerating $\mathrm{acl}(\emptyset,\mathcal{M})$. By (2), and by shrinking  $\bar{c}$ if necessary, we get $\bar{a}, b$ so that $\{\bar{a}\}, \{b\}, \{\bar{c}\}, \{\bar{e}\}$ and pairwise disjoint and $b\in\mathrm{acl}(\bar{a}\bar{c}\bar{e})$. 
\begin{claim}
There exists $g\in\mathrm{Aut}(\mathcal{M})_{\bar{e}}$ so that the tuple $g(\bar{a}b\bar{c})$ is disjoint from  $\bar{c}\bar{e}$.
\end{claim}
\begin{proof}[Proof of Claim]
Every $d\in\mathrm{dom}(\mathcal{M})\setminus \{\bar{e}\}$ has an infinite orbit  under  $\mathrm{Aut}(\mathcal{M})_{\bar{e}}$. Indeed, since $\bar{e}$ enumerates $\mathrm{acl}(\emptyset,\mathcal{M})$, there exist $h_1,\ldots,h_k \in \mathrm{Aut}(\mathcal{M})$ so that  every $g\in \mathrm{Aut}(\mathcal{M})$ is of the form $g=h_ih$ for some $h\in \mathrm{Aut}(\mathcal{M})_{\bar{e}}$ and $i\leq k$. Hence, if the set $E=\{gd \colon g\in  \mathrm{Aut}(\mathcal{M})_{\bar{e}} \}$ was finite, then so would be  the set $\{gd \colon g\in  \mathrm{Aut}(\mathcal{M})\}=\bigcup_i h_iE$.
Since  $\{\bar{a}b\bar{c}\}\subseteq\mathrm{dom}(\mathcal{M})\setminus \{\bar{e}\}$, by Lemma \ref{Thm:Neu} there exists a $g$ as in the statement of the claim.
\end{proof}

Fix $g$ as in the claim and set $\bar{a}':=g(\bar{a}\bar{c})$ and $b':=g(b)$. By the claim, and since $\{\bar{a}\}, \{b\}$  and $\{\bar{c}\}$  were pairwise disjoint, so are $\{\bar{a}'\}, \{b'\}$, and  $\{\bar{c}\}$. But then we have that 
\[b\in\mathrm{acl}(\bar{a}\bar{c}\bar{e},\mathcal{N}) \implies 
g(b)\in\mathrm{acl}(g(\bar{a}\bar{c})g(\bar{e}),\mathcal{N}) \implies 
b'\in\mathrm{acl}(\bar{a}'\bar{e},\mathcal{N})\implies b'\in\mathrm{acl}(\bar{a}',\mathcal{N}), \]
where the last implication follows from the fact that 
$\bar{e}\in \mathrm{acl}(\emptyset,\mathcal{N})$, by an argument similar to the one in the previous claim.
\end{proof}

Using the above reformulation (3) of  high algebraicity, we get the following dichotomy. See Examples \ref{Ex:1}, \ref{Ex:2} for simple examples of structures on each side of the dichotomy.

\begin{lemma}\label{L:aris1.5}
If $\mathcal{N}$ is highly algebraic, then for every finite tuple $\bar{c}$ there exist  $\bar{a}$ and  $b$ in $\mathrm{dom}(\mathcal{N})$ so that the sets $\{\bar{a}\}$, $\{b\}$, and $\{\bar{c}\}$ are pairwise disjoint and one of following holds:
\begin{enumerate}
\item $\{\bar{a}\}=\{a\}$ is a singleton and both $b\in \mathrm{acl}(a,\mathcal{N})$ and $a\in \mathrm{acl}(b,\mathcal{N})$ hold; or
\item $b\in \mathrm{acl}(\bar{a},\mathcal{N})$ and there exists and infinite sequence $(\bar{a}^i\colon i\in \mathbb{N})$ of  pairwise disjoint tuples so that  for every $i\in\mathbb{N}$ we have that $\mathrm{qft}(\bar{a}^ib,\mathcal{N})=\mathrm{qft}(\bar{a}b,\mathcal{N})$.
\end{enumerate}
\end{lemma}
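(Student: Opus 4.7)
The plan is to apply Lemma \ref{L:aris1.3}(3) and then to run a short length-minimization argument that splits into the two alternatives. Fix $\bar{c}$ and call a pair $(\bar{a},b)$ \emph{valid} if $\{\bar{a}\},\{b\},\{\bar{c}\}$ are pairwise disjoint and $b\in\mathrm{acl}(\bar{a},\mathcal{N})$. By Lemma \ref{L:aris1.3}(3) a valid pair exists, and a valid pair with $|\bar{a}|\ge 1$ exists too: if $(\emptyset,b)$ is valid, pick $a\in\mathrm{dom}(\mathcal{N})\setminus(\{\bar{c}\}\cup\{b\})$ and observe that $((a),b)$ is valid because $b\in\mathrm{acl}(\emptyset,\mathcal{N})\subseteq\mathrm{acl}(a,\mathcal{N})$. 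I would then choose a valid $(\bar{a},b)$ with $|\bar{a}|$ minimal subject to $|\bar{a}|\ge 1$ and write $H:=\mathrm{Aut}(\mathcal{N})_b$.

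Next I would branch on whether some coordinate of $\bar{a}$ has finite $H$-orbit, i.e.\ lies in $\mathrm{acl}(b,\mathcal{N})$. Suppose first that some $a\in\{\bar{a}\}$ satisfies $a\in\mathrm{acl}(b,\mathcal{N})$. Then the ``swapped'' pair $\bigl((b),a\bigr)$ is itself valid: the disjointness conditions carry over, and $a\in\mathrm{acl}(b,\mathcal{N})$. Its $\bar{a}$-component has length $1$, so minimality of $|\bar{a}|$ forces $|\bar{a}|=1$ and hence $\bar{a}=(a)$. Combining $b\in\mathrm{acl}(\bar{a},\mathcal{N})=\mathrm{acl}(a,\mathcal{N})$ with $a\in\mathrm{acl}(b,\mathcal{N})$ yields conclusion (1) of the lemma. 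Suppose instead that every coordinate of $\bar{a}$ has infinite $H$-orbit, and put $\Omega:=\mathrm{dom}(\mathcal{N})\setminus\mathrm{acl}(b,\mathcal{N})$; this is an $H$-invariant subset of $\mathrm{dom}(\mathcal{N})$ on which every $H$-orbit is infinite, so Neumann's Lemma \ref{Thm:Neu} applies to $H\curvearrowright\Omega$, and by hypothesis $\{\bar{a}\}\subseteq\Omega$. Setting $\bar{a}^0:=\bar{a}$ and recursively invoking Neumann with $\Gamma:=\{\bar{a}\}$ and $\Delta_k:=\bigcup_{j\le k}\{\bar{a}^j\}$ produces $h_{k+1}\in H$ with $\{h_{k+1}\bar{a}\}\cap\Delta_k=\emptyset$; setting $\bar{a}^{k+1}:=h_{k+1}\bar{a}$ yields the required pairwise disjoint sequence, and since each $h_i\in H$ fixes $b$ we have $\mathrm{qft}(\bar{a}^ib,\mathcal{N})=\mathrm{qft}(\bar{a}b,\mathcal{N})$, giving conclusion (2).

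The only non-routine piece is arranging Neumann's lemma to apply: a priori $H$ has the many finite orbits constituting $\mathrm{acl}(b,\mathcal{N})$, so one must restrict to the invariant set $\Omega$ on which all orbits are infinite. The preliminary case split is arranged precisely so that in the second alternative the coordinates of $\bar{a}$ all lie in $\Omega$, and the swap-minimality step is what extracts the singleton and mutual algebraicity in the first alternative.
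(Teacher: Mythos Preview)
Your argument is correct and follows essentially the same route as the paper: invoke Lemma~\ref{L:aris1.3}(3), minimize the length of $\bar a$, use the ``swap'' $((b),a)$ together with minimality to force $|\bar a|=1$ in the first branch, and apply Neumann's Lemma~\ref{Thm:Neu} to $\mathrm{Aut}(\mathcal N)_b$ in the second. The only cosmetic differences are that you minimize subject to $|\bar a|\ge 1$ (padding with a dummy coordinate if necessary) whereas the paper minimizes unrestrictedly and then argues $\bar a\neq\emptyset$, and that you make the restriction to the invariant set $\Omega=\mathrm{dom}(\mathcal N)\setminus\mathrm{acl}(b,\mathcal N)$ explicit where the paper leaves it implicit.
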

\begin{proof}

By Lemma \ref{L:aris1.3}(3) there exist
 $\bar{a}$ and  $b$ in $\mathrm{dom}(\mathcal{N})$ so that the sets $\{\bar{a}\}$, $\{b\}$, and $\{\bar{c}\}$ are pairwise disjoint and  $b\in \mathrm{acl}(\bar{a},\mathcal{N})$. Assume that there is no such  $\bar{a}$ and  $b$ satisfies (2) above.   Choose   $\bar{a}$ and $b$ as in Lemma \ref{L:aris1.3}(3) so that  $\bar{a}$ is of minimum possible size. Since by assumption  $\bar{a}$,  $b$ do not satisfy (2) we have that $\bar{a}\neq \emptyset$.
 
 Let now $a$ be any entry of $\bar{a}$ so that $a\in \mathrm{acl}(b,\mathcal{N})$. Such  entry exists since, otherwise, by Lemma \ref{Thm:Neu} we would have that (2) holds for this $\bar{a}$ and  $b$.  But since $\{b\}$ is a singleton for which there is $a\in \mathrm{acl}(b,\mathcal{N})$, the minimality assumption on the length of $\bar{a}$ above implies that  $\{\bar{a}\}=\{a\}$ and hence  (1) above follows. 
\end{proof}

\subsection{Proof  of Lemma \ref{L:aris2}} \label{SS:2} We may now conclude the proof  of Lemma \ref{L:aris2} which, in turn, establishes  the $(\Rightarrow)$ direction of Theorem \ref{T:main}. Recall that throughout this section we assume that the structure $\mathcal{N}\in\mathrm{Str}(\mathcal{L},\mathbb{N})$  is  highly algebraic and  $R$--ultrahomogeneous.

\begin{proof}[Proof of Lemma \ref{L:aris2}]
Let $K\subseteq X$ be compact. 

For every $s\in 2^{<\mathbb{N}}$ we will define a natural number $\ell_s$ and some permutation $\gamma_s$ of the finite set $\ell_s=\{0,1,\ldots,\ell_s-1\}$ so that for all $s\in 2^{<\mathbb{N}}$ we have that:
\begin{enumerate}
    \item[(i)]  $\ell_{\emptyset}=0$ and $\gamma_{\emptyset}$ is the empty permutation;
    \item[(ii)] $\ell_{s^{\frown}0}= \ell_{s^{\frown}1}$  and 
    $\ell_{s^{\frown}0},\ell_{s^{\frown}1}> \ell_s$, as well as $\gamma_{s^{\frown}0}\upharpoonright \ell_s=\gamma_s$ and $\gamma_{s^{\frown}1}\upharpoonright \ell_s=\gamma_s$;
    \item[(iii)] $g_0K\cap g_1 K=\emptyset$, for all $g_0,g_1\in \mathrm{Sym}(\mathbb{N})$ with $g_0\upharpoonright \ell_{s^{\frown}0}=\gamma_{s^{\frown}0}$ and $g_1\upharpoonright \ell_{s^{\frown}1}=\gamma_{s^{\frown}1}$.
\end{enumerate}

Granted $(\gamma_s\colon s\in 2^{<\mathbb{N}})$ as above, we define the desired family $(g_{\alpha}\colon \alpha\in 2^{\mathbb{N}})$ by setting
\[g_{\alpha}:=\bigcup_{n\in\mathbb{N}} \gamma_{\alpha|n}\]
Then, (ii) implies that $g_{\alpha}\in \mathrm{Sym}(\mathbb{N})$ and (iii) implies that $g_{\alpha} K \cap g_{\beta} K =\emptyset$, whenever $\alpha\neq \beta$.

We proceed now to the definition of $\ell_s,\gamma_s$ by induction on $s$. Assume that $\ell_s, \gamma_s$ have been defined. We define $\ell_{s^{\frown}0},\ell_{s^{\frown}1}$ and $ \gamma_{s^{\frown}0}, \gamma_{s^{\frown}1}$ using the following construction.

\begin{claim}\label{Claim:1in}
There exists a finite family  $\{S_i\colon i\in I\}$ of  $\ell_s$-ary relations $S_i$ so that:
\begin{enumerate}
    \item[(I)] For every $\mathcal{M}\in K$ there is $i\in I$ so that $\mathcal{M}\models S_i(0,1,\ldots,\ell_s-1)$;
    \item[(II)] For every $\ell_s$-ary tuple $\bar{e}$ with $\mathcal{N}\models S_i(\bar{e})$ we have that $S_i$ captures  $\bar{e}$ in $\mathcal{N}$.
\end{enumerate}
\end{claim}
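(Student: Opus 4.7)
\medskip

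\noindent\textbf{Plan for the proof of Claim \ref{Claim:1in}.} The plan is to exploit $R$--ultrahomogeneity to witness, for each $\mathcal{M}\in K$, that the tuple $(0,1,\ldots,\ell_s-1)$ realises some quantifier-free type captured by a single relation symbol of $\mathcal{L}$; the finite family $\{S_i\}$ will then come out of the compactness of $K$.

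\medskip

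\noindent\emph{Step 1: Produce a capturing relation for each $\mathcal{M}\in K$.} First I would note that $R$--ultrahomogeneity is an isomorphism invariant, so every $\mathcal{M}\in K=[\mathcal{N}]_{\mathrm{iso}}$ is itself $R$--ultrahomogeneous. Fix $\mathcal{M}\in K$ and apply the definition to the tuple $\bar{d}_{\mathcal{M}}:=(0,1,\ldots,\ell_s-1)$: there exists a relation symbol $S_{\mathcal{M}}\in\mathcal{L}$ (necessarily of arity $\ell_s$) that captures $\mathrm{qft}(\bar{d}_{\mathcal{M}},\mathcal{M})$ in $\mathcal{M}$. In particular $\mathcal{M}\models S_{\mathcal{M}}(\bar{d}_{\mathcal{M}})$.

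\medskip

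\noindent\emph{Step 2: Extract a finite subfamily by compactness.} For each $\mathcal{M}\in K$, the set
\[
U_{\mathcal{M}}:=\{\mathcal{M}'\in X:\mathcal{M}'\models S_{\mathcal{M}}(0,1,\ldots,\ell_s-1)\}
\]
is a basic clopen subset of $\mathrm{Str}(\mathcal{L},\mathbb{N})$ (by the description of the topology in Section~\ref{SS:Background:1}) and it contains $\mathcal{M}$. Thus $\{U_{\mathcal{M}}\cap K:\mathcal{M}\in K\}$ is an open cover of the compact set $K$, so a finite subcover $\{U_{\mathcal{M}_i}:i\in I\}$ exists. Setting $S_i:=S_{\mathcal{M}_i}$ gives a finite family of $\ell_s$-ary relations, and property~(I) is immediate.

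\medskip

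\noindent\emph{Step 3: Transfer the capturing property from $\mathcal{M}_i$ to $\mathcal{N}$.} For property~(II), fix $i\in I$ and some isomorphism $\varphi\colon\mathcal{M}_i\to\mathcal{N}$. By the choice of $S_i$,
\[
\mathcal{M}_i\models S_i(\bar{c})\iff\mathrm{qft}(\bar{c},\mathcal{M}_i)=\mathrm{qft}(\bar{d}_{\mathcal{M}_i},\mathcal{M}_i),
\]
for every tuple $\bar{c}$ in $\mathrm{dom}(\mathcal{M}_i)$. Since $\varphi$ preserves both the interpretation of $S_i$ and quantifier-free types, pushing this forward by $\varphi$ shows that $S_i$ captures $\mathrm{qft}(\varphi(\bar{d}_{\mathcal{M}_i}),\mathcal{N})$ in $\mathcal{N}$. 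Hence whenever $\mathcal{N}\models S_i(\bar{e})$, the type $\mathrm{qft}(\bar{e},\mathcal{N})$ is captured by $S_i$, which is exactly~(II).

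\medskip

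\noindent I do not anticipate a real obstacle here: $R$--ultrahomogeneity hands us the single-relation capturing property for free, and compactness of $K$ does the rest. The one thing to be careful about is making sure that ``capturing'' is genuinely transported along an isomorphism, which is why Step~3 is made explicit.
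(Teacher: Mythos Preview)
Your argument is correct and is exactly the approach the paper has in mind; the paper's own proof is the single line ``This follows from compactness of $K$ since $\mathcal{N}$ is $R$--ultrahomogeneous,'' which your three steps unpack faithfully. One minor slip: in Step~1 you write $K=[\mathcal{N}]_{\mathrm{iso}}$, but $K$ is only a compact subset of $X=[\mathcal{N}]_{\mathrm{iso}}$; this does not affect anything since all you need is $\mathcal{M}\cong\mathcal{N}$ for each $\mathcal{M}\in K$.
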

\begin{proof}[Proof of Claim]
 This follows from compactness of $K$ since $\mathcal{N}$ is $R$--ultrahomogeneous.
\end{proof}

For each $i\in I$ let $\bar{c}^i$ be a tuple in $\mathbb{N}$ so that $\mathcal{N}\models S_i(\bar{c}^i)$ and let $\bar{c}$ be the union of all these tuples. Apply Lemma \ref{L:aris1.5} for this $\bar{c}$ to get $\bar{a}$,  $b$ so that $\{\bar{a}\}, \{b\}, \{\bar{c}\}$ are pairwise disjoint and for which either (1) or (2) of Lemma \ref{L:aris1.5} holds.  Since $\mathcal{N}$ is $R$--ultrahomogeneous, we may  pick  relations $P,Q,R\in \mathcal{L}$, so that: $P$ captures $\mathrm{qft}(\bar{a},\mathcal{N})$; $Q$ captures $\mathrm{qft}(b,\mathcal{N})$; and $R$ captures $\mathrm{qft}(\bar{a}b,\mathcal{N})$.
Below we will abuse notation and use the letters 
$\bar{a}$ and $b$ for potentially new tuples which satisfy  $P$ and $Q$, respectively, in any structure $\mathcal{M}$ with $\mathcal{M}\simeq_{\mathrm{iso}}\mathcal{N}$.

We define $\ell_{s^{\frown}0}, \ell_{s^{\frown}1},\gamma_{s^{\frown}0},\gamma_{s^{\frown}1}$ by considering separately cases (1) \& (2) of Lemma \ref{L:aris1.5}.

\medskip{}

\noindent {\bf Case (1).} Assume that Lemma \ref{L:aris1.5}(1) holds for $\bar{a}, b$,  $\bar{c}$. In particular, we have that both $P,Q$ are unary  relations and $R$ is a binary relation.

\begin{claim}
There exists $\ell>\ell_s$ and some set $Z\subseteq \mathbb{N}\setminus \{0,\ldots,\ell-1\}$ with $|Z|=\ell-\ell_s$, so that for every $\mathcal{M}\in K$ we have that:
\begin{enumerate}
    \item[(a)] there exist $a,b\in \{\ell_s,\ldots,\ell-1\}$ so that $\mathcal{M}\models R(a,b)$;
    \item[(b)] for every two $p,q\in Z$  we have that  $\mathcal{M}\not\models R(p,q)$.
\end{enumerate}
\end{claim}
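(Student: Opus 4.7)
My plan is to establish (a) via a compactness argument and (b) by first proving a local finiteness property of the relation $E^* := \bigcup_{\mathcal{M} \in K} R^{\mathcal{M}}$ and then building $Z$ greedily.

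For (a): Since $a, b \notin \mathrm{acl}(\emptyset, \mathcal{N})$ (which can be arranged by inspection of the proofs of Lemmas \ref{L:aris1.3} and \ref{L:aris1.5}, or simply by enlarging $\bar{c}$ so it contains an enumeration of $\mathrm{acl}(\emptyset, \mathcal{N})$), I will apply Neumann's lemma to the action of $\mathrm{Aut}(\mathcal{M})$ on $\mathrm{dom}(\mathcal{M}) \setminus \mathrm{acl}(\emptyset, \mathcal{M})$ (which has no finite orbits). For each $\mathcal{M} \in K$, starting from any witnessing pair $(a_0, b_0) \in R^{\mathcal{M}}$ (which exists via the isomorphism $\mathcal{M} \cong \mathcal{N}$), I shift it via some $h \in \mathrm{Aut}(\mathcal{M})$ into $[\ell_s, \infty)^2$. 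The clopen neighborhoods $U_{\mathcal{M}} := \{\mathcal{M}' : \mathcal{M}' \models R(h a_0, h b_0)\}$ then cover $K$, and compactness of $K$ extracts a finite subcover; I define $\ell$ to exceed all coordinates appearing among these finitely many witnesses.

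For (b): The key step is to show that $E^*$ is locally finite on $\mathbb{N}$, meaning both $E^*_p := \{q : \exists \mathcal{M} \in K,\ \mathcal{M} \models R(p, q)\}$ and the analogous predecessor set are finite for every $p$. Suppose for contradiction that $E^*_p$ is infinite, with distinct witnesses $q_1, q_2, \ldots$ realized in some $\mathcal{M}_i \in K$. Because $R$ captures $\mathrm{qft}(ab, \mathcal{N})$, the truth of $R(p, q_i)$ forces $\mathcal{M}_i \models P(p)$. Extracting a convergent subsequence $\mathcal{M}_{i_k} \to \mathcal{M}^* \in K$, pointwise convergence of atomic formulas yields $\mathcal{M}^* \models P(p)$; and since $\mathcal{M}^* \simeq \mathcal{N}$ together with $b \in \mathrm{acl}(a, \mathcal{N})$, the set $S^* := \{q : \mathcal{M}^* \models R(p, q)\}$ has a fixed finite cardinality $N$. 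Pointwise convergence then eventually forces $S^* \subseteq T_{i_k} := \{q : \mathcal{M}_{i_k} \models R(p, q)\}$, and combined with the uniform bound $|T_{i_k}| = N = |S^*|$ (from $\mathcal{M}_{i_k} \simeq \mathcal{N}$), this gives equality $T_{i_k} = S^*$ for large $k$, so $q_{i_k} \in S^*$ eventually, contradicting distinctness of the $q_{i_k}$.

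Granted local finiteness, $Z$ is built by a greedy construction inside $[\ell, \infty)$: having chosen $z_1, \ldots, z_{j-1}$, I pick $z_j$ outside the finite set $\{z_1, \ldots, z_{j-1}\} \cup \bigcup_{i < j}(E^*_{z_i} \cup (E^*)^{-1}_{z_i})$, which is always possible since $[\ell, \infty)$ is infinite; iterating $\ell - \ell_s$ steps yields the desired $Z$. The main obstacle I anticipate is the ``equality of successor sets'' step in the local finiteness argument: pointwise convergence by itself only gives eventual containment $S^* \subseteq T_{i_k}$, and upgrading this to equality requires the uniform cardinality $|T_{i_k}| = N$ furnished by $R$-ultrahomogeneity together with $b \in \mathrm{acl}(a, \mathcal{N})$.
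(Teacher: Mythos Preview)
Your proof is correct. For (b) your approach coincides with the paper's: the local finiteness of $E^*$ that you establish via sequential compactness is precisely what the paper is invoking when it writes ``by compactness of $K$ and the fact that \ldots\ there exist at most finitely many solutions,'' and both then build $Z$ one point at a time. For (a) you take a different route. The paper exploits the $S_i$ infrastructure from Claim~\ref{Claim:1in}: since $a,b$ were chosen disjoint from each $\bar{c}^i$, the sentence $\forall \bar{z}\,\big(S_i(\bar{z})\Rightarrow \exists x,y\notin\bar{z}\;R(x,y)\big)$ holds in $\mathcal{N}$ and hence in every $\mathcal{M}\in K$, so compactness yields $\ell$ directly---no hypothesis on $\mathrm{acl}(\emptyset)$ is needed. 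Your Neumann-based shifting, by contrast, requires $a,b\notin\mathrm{acl}(\emptyset,\mathcal{N})$; your justification of this is a bit informal, and the ``enlarge $\bar{c}$'' option presupposes $\mathrm{acl}(\emptyset,\mathcal{N})$ finite. The ``by inspection'' option does work: in the proof of Lemma~\ref{L:aris1.5}, alternative (1) is derived only after assuming alternative (2) fails for \emph{every} choice of $\bar{a},b$, and taking $\bar{a}=\emptyset$ in (2) then forces $\mathrm{acl}(\emptyset,\mathcal{N})\subseteq\{\bar{c}\}$, whence $a,b\notin\mathrm{acl}(\emptyset,\mathcal{N})$. So your argument goes through, but the paper's is cleaner here because it uses the $S_i$ relations for exactly the purpose they were introduced.
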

\begin{proof}[Proof of Claim]
By the choice of $R$ and $S_i$, for every $i\in I$ we have that 
\[\mathcal{N}\models \forall \bar{z}  \;\big(  S_i(\bar{z})  \implies \exists x \not \in \bar{z}  \;
\;\exists y\not \in \bar{z}  \; \;  R(x,y)\big)\]
Hence, by Claim \ref{Claim:1in}, for every $\mathcal{M}\simeq_{\mathrm{iso}}\mathcal{N}$ there exists a least number $n(\mathcal{M})\geq \ell_s$ so that  $\mathcal{M}\models R(a,b)$ for some  $a,b$ with $\ell_s\leq a,b <n(\mathcal{M})$. Since $\mathcal{M}\mapsto n(\mathcal{M})$ is a continuous map, by compactness of $K$ we can find $\ell:=\sup_K n(\mathcal{M})$ satisfying property (a) above.

We next build $Z$ inductively,  adding one point at a time, as follows. Assume that some fragment $Y$ of $Z$ has been defined so that for all
$\mathcal{M}\in K$ property (b) above holds for $Y$ in place of $Z$.
By compactness of $K$ and the fact that for each  $\mathcal{M}\simeq_{\mathrm{iso}}\mathcal{N}$, and every fixed $p,q\in\mathbb{N}$, there exist at most finitely many solutions to the formulas $\varphi(y) \equiv R(p,y)$ and $\psi(x) \equiv R(x,q)$,  we may find  $\ell'> \max(Y), \ell$, so that for every  $\mathcal{M}\in K$ we have that:
\begin{itemize}
    \item[(i)]    for every $p\in Y$ and every $q\geq \ell'$ we have that  $\mathcal{M}\not\models R(p,q)$;
    \item[(ii)]   for every $q\in Y$ and every $p\geq \ell'$ we have that  $\mathcal{M}\not\models R(p,q)$.
\end{itemize}
It follows that  property (b) above holds for $Y':=Y\cup\{\ell'\}$ in place of $Z$. Since $|Y'|>|Y|$, by repeating this process enough times we define $Z$, with $|Z|=\ell\setminus \ell_s$, satisfying  (b) above.
\end{proof}

Let now $\ell$ and $Z$ as in the last claim and set $\ell_{s^{\frown}0}, \ell_{s^{\frown}1}:=\max(Z)+1$. Let $\gamma_{s^{\frown}0}$ be the permutation on the set $\ell_{s^{\frown}0}$, with  $\gamma_{s^{\frown}0}\upharpoonright \ell_s= \gamma_{s}$, which is identity on $\ell_{s^{\frown}0}\setminus\ell_s$.
Let also $\gamma_{s^{\frown}1}$ be any permutation on the set $\ell_{s^{\frown}1}$, with  $\gamma_{s^{\frown}1}\upharpoonright \ell_s= \gamma_{s}$, which induces a bijection between the sets  $\ell\setminus \ell_s$ and $Z$.
 We are left to show that  (iii) above holds.

Indeed, let  $g_0,g_1\in \mathrm{Sym}(\mathbb{N})$ with $g_0\upharpoonright \ell_{s^{\frown}0}=\gamma_{s^{\frown}0}$ and $g_1\upharpoonright \ell_{s^{\frown}1}=\gamma_{s^{\frown}1}$ and assume towards contradiction that $g_0K\cap g_1 K\neq\emptyset$. Set $g:=g^{-1}_1g_0$ and fix some $\mathcal{M}\in K$ so that $g_*\mathcal{M}\in K$. Since $\mathcal{M}\in K$, by (a) above we get $a,b<\ell$  so that $\mathcal{M}\models R(a,b)$. Hence,  $g_*\mathcal{M}\models R(ga,gb)$.
But since $g_*\mathcal{M}\in K$,  by (b) above and the fact that  $ga, gb \in Z$ we have $g_*\mathcal{M}\not\models R(ga,gb)$; a contradiction. If follows that $g_0K\cap g_1 K=\emptyset$ as desired.

\bigskip{}

\noindent {\bf Case (2).} Assume that Lemma \ref{L:aris1.5}(2) holds for $\bar{a}, b$, and $\bar{c}$. 

\begin{claim}
There exist $\ell,\ell',\ell''\in\mathbb{N}$ with $\ell''>\ell'>\ell>\ell_s$ so that for all $\mathcal{M}\in K$ we have:
\begin{enumerate}
    \item[(a)] there exists $b\in\mathbb{N}$, with $\ell_s \leq b<\ell$, so that $\mathcal{M}\models Q(\bar{b})$.
    \item[(b)] for all $b<\ell$ with  $\mathcal{M}\models Q(\bar{b})$, there is $\bar{a}$ in $\ell'\setminus \ell$ so that $\mathcal{M}\models R(\bar{a},b)$.
   \item[(c)] for all $\bar{a}$ in $\ell'\setminus \ell$ with  $\mathcal{M}\models P(\bar{a})$, there exists no $b\geq \ell''$ so that $\mathcal{M}\models R(\bar{a},b)$. 
\end{enumerate}
\end{claim}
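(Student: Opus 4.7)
The plan is to prove (a), (b), (c) in order, each by transferring a statement from $\mathcal{N}$ to an arbitrary $\mathcal{M} \in K$ via R--ultrahomogeneity and then invoking compactness of $K$ to extract a uniform bound, in the spirit of the Case~(1) claim.

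For (a), since $\bar{c} \supseteq \bar{c}^i$ for every $i \in I$, Lemma~\ref{L:aris1.5}(2) furnishes a realization of $Q$ in $\mathcal{N}$ disjoint from $\bar{c}^i$, so R--ultrahomogeneity gives
\[\mathcal{N}\models \forall \bar{z}\,\bigl(S_i(\bar{z}) \to \exists y\,(y \notin \bar{z} \wedge Q(y))\bigr)\]
for every $i \in I$. Being isomorphism-invariant, this holds in every $\mathcal{M} \in K$. Applying it to the $S_i$-witness $(0,\ldots,\ell_s-1)$ provided by Claim~\ref{Claim:1in}(I) and taking the least such $y$ gives a continuous $\mathbb{N}$-valued function on $K$, and compactness delivers $\ell > \ell_s$ satisfying (a). For (b), the infinite pairwise-disjoint sequence $(\bar{a}^i)$ in Lemma~\ref{L:aris1.5}(2) shows that for any finite $F$,
\[\mathcal{N}\models \forall y\,\bigl(Q(y) \to \exists \bar{x}\,(\bar{x} \cap F = \emptyset \wedge R(\bar{x},y))\bigr),\]
which transfers to every $\mathcal{M}\in K$. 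Applying it to each of the finitely many $b \in [0,\ell)$ with $\mathcal{M} \models Q(b)$ (with $F = [0,\ell)$) and again using compactness produces a uniform bound $\ell' > \ell$.

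For (c), algebraicity of $b$ over $\bar{a}$ is used essentially: since $\{y : \mathcal{N} \models R(\bar{a},y)\}$ equals the orbit $\mathrm{Aut}(\mathcal{N})_{\bar{a}}\cdot b$, it is finite of some cardinality $k$; because $P$ captures $\mathrm{qft}(\bar{a},\mathcal{N})$ and $\mathcal{N}$ is R--ultrahomogeneous, every realization of $P$ in $\mathcal{N}$ (hence, by isomorphism, in every $\mathcal{M} \in X$) has exactly $k$ many $R$-companions, i.e.
\[\mathcal{N}\models \forall \bar{x}\,\bigl(P(\bar{x}) \to \exists^{=k} y\, R(\bar{x},y)\bigr).\]
With this cardinality constraint, the function $\mathcal{M} \mapsto \max\{y : \mathcal{M} \models R(\bar{a},y)\}$ has clopen level sets and is therefore continuous on $K$, for each of the finitely many $\bar{a}$ in $(\ell'\setminus\ell)^{|\bar{a}|}$ realizing $P$. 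Compactness of $K$ then yields a uniform upper bound $\ell'' > \ell'$.

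The main subtlety---and the only real departure from the Case~(1) argument---is the upgrade in (c) from the qualitative statement ``$\{y : R(\bar{a},y)\}$ is finite'' to a first-order statement with a uniform cardinality $k$. This is where R--ultrahomogeneity together with the capturing relation $P$ for $\mathrm{qft}(\bar{a},\mathcal{N})$ is used essentially, to ensure $k$ is independent of the particular realization of $P$; without it, one would only get lower semi-continuity of the $\max$ function, which need not be bounded on a compact set. Once this upgrade is in hand, assembling $\ell,\ell',\ell''$ is routine compactness bookkeeping.
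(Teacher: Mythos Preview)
Your proposal is correct and follows essentially the same three-step compactness argument as the paper's proof: find $\ell$ by compactness for (a), then $\ell'$ for (b) using the infinite disjoint family from Lemma~\ref{L:aris1.5}(2), then $\ell''$ for (c) using finiteness of $\{y: R(\bar a,y)\}$. Your write-up is in fact more detailed than the paper's---in particular, your explicit discussion of why the uniform cardinality $k$ (coming from $P$ capturing $\mathrm{qft}(\bar a,\mathcal{N})$) makes the relevant level sets clopen is a point the paper leaves implicit; the only minor imprecision is the phrase ``for each of the finitely many $\bar a$ \ldots\ realizing $P$,'' since which tuples realize $P$ depends on $\mathcal{M}$, but your intended meaning (restrict to the clopen, hence compact, set $\{\mathcal{M}\in K:\mathcal{M}\models P(\bar a)\}$ for each fixed tuple) is clear and correct.
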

\begin{proof}[Proof of Claim]

By compactness of $K$ we can find some $\ell>\ell_s$ so that for all $\mathcal{M}\in K$ property (a) holds. Since each $\mathcal{M}$ with $\mathcal{M}\simeq_{\mathrm{iso}}\mathcal{N}$ is $R$--ultrahomogeneous and $P,Q,R$ capture the pertinent quantifier free types, for any $\mathcal{M}\in K$ and any fixed  $b\in\mathbb{N}$ with  $\mathcal{M}\models Q(b)$ there exists   $\bar{a}$ so that $\mathcal{M}\models R(\bar{a},b)$. In fact, since we are under the assumption that Lemma \ref{L:aris1.5}(2) holds, for  every $\mathcal{M}\in K$ we can can always find such $\bar{a}$ outside of $\{0,1,\ldots,\ell-1\}$.  Hence by a second application of compactness of $K$  we may find some $\ell'>\ell$ so that property (b) holds as well. 
By a third and final application of compactness
of $K$, and since for each  $\mathcal{M}\in K$ and every $\bar{a}\in\mathbb{N}$ there are only finitely many solutions to the formula $\varphi(y) \equiv R(\bar{a},y)\wedge P(\bar{a})$, we may find  $\ell''>\ell'$ for which  property (c) holds.
\end{proof}

Let now $\ell,\ell',\ell''$ as in the above claim and set $\ell_{s^{\frown}0}, \ell_{s^{\frown}1}:=\ell''+\ell$.

Let $\gamma_{s^{\frown}0}$ be the permutation on the set $\ell_{s^{\frown}0}$, with  $\gamma_{s^{\frown}0}\upharpoonright \ell_s= \gamma_{s}$, which is identity on $\ell_{s^{\frown}0}\setminus\ell_s$.
Let also $\gamma_{s^{\frown}1}$ be any permutation on the set $\ell_{s^{\frown}1}$, with  $\gamma_{s^{\frown}1}\upharpoonright \ell_s= \gamma_{s}$,  which  fixes every $k\in\{\ell,\ldots,\ell''-1\}$, but which exchanges $k$ and  $k+\ell''$, for all  $k$ with $\ell_s\leq k <\ell$. We are left to show that  (iii) above holds.

Let  $g_0,g_1\in \mathrm{Sym}(\mathbb{N})$ with $g_0\upharpoonright \ell_{s^{\frown}0}=\gamma_{s^{\frown}0}$ and $g_1\upharpoonright \ell_{s^{\frown}1}=\gamma_{s^{\frown}1}$ and assume towards contradiction that $g_0K\cap g_1 K\neq\emptyset$. Set $g:=g^{-1}_1g_0$ and fix some $\mathcal{M}\in K$ so that $g_*\mathcal{M}\in K$. Since $\mathcal{M}\in K$, by (a) and (b) above we get $b$ in $\ell\setminus \ell_s$ and $\bar{a}$ in $\ell'\setminus \ell$ so that $\mathcal{M}\models R(\bar{a},b)$. Hence,  $g_*\mathcal{M}\models R(g\bar{a},gb)$, and since $g$ fixes $\bar{a}$ we have that $g_*\mathcal{M}\models R(\bar{a},gb)$. But then, by applying (c) to $g_*\mathcal{M}\in K$,
we get a contradiction with the fact that $gb\geq \ell''$.
\end{proof}

\section{Structures which are not highly algebraic are quasi--random}

The following is the main result of this section.

\begin{theorem}\label{T:main2N} Let $\mathcal{M}$ be a $\mathcal{L}$-structure such that there is finite $B\subset \N$ satisfying for all $\bar{a}\in \N$, $\mathrm{acl}(\bar{a},\mathcal{M})\subseteq \{\bar{a}\}\cup B$. Then there is a  $\mathrm{Sym}(\mathbb{N})$-quasi--invariant probability measure on $\mathrm{Str}(\mathcal{L},\N)$ concentrated on structures isomorphic to $\mathcal{M}$. 
\end{theorem}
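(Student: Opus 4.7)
The plan is to reduce to the Ackerman--Freer--Patel theorem by replacing $\mathcal{M}$ with an expanded structure that has \emph{no} algebraicity, and then to randomize the placement of the finite ``algebraic core'' inside $\mathbb{N}$ so as to spread an invariant measure into a quasi-invariant one.

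First I would shrink $B$ to an $\mathrm{Aut}(\mathcal{M})$-invariant set. Put
\[B_0:=\big\{b\in\mathbb{N}:b\in\mathrm{acl}(\bar{a},\mathcal{M})\setminus\{\bar{a}\}\text{ for some tuple }\bar{a}\big\}.\]
Then $B_0\subseteq B$ is finite, invariant under $\mathrm{Aut}(\mathcal{M})$, and the hypothesis still reads $\mathrm{acl}(\bar{a},\mathcal{M})\subseteq\{\bar{a}\}\cup B_0$. Assume $B_0=\{0,\dots,k-1\}$. Expand the language to $\mathcal{L}^\circ$ by adjoining, for every tuple $\bar{b}\in B_0^{<\omega}$ and every quantifier-free $\mathcal{L}$-formula $\varphi(\bar{y},\bar{x})$, a relation symbol $R_{\bar{b},\varphi}$ of arity $|\bar{x}|$; form the $\mathcal{L}^\circ$-structure $\mathcal{M}^\circ$ on $\mathbb{N}\setminus B_0$ by $\mathcal{M}^\circ\models R_{\bar{b},\varphi}(\bar{a})\iff \mathcal{M}\models \varphi(\bar{b},\bar{a})$. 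A routine check shows that $\mathrm{Aut}(\mathcal{M}^\circ)$ is the restriction to $\mathbb{N}\setminus B_0$ of the pointwise stabilizer of $B_0$ in $\mathrm{Aut}(\mathcal{M})$, and the hypothesis $\mathrm{acl}(B_0\cup\bar{a},\mathcal{M})\subseteq\{\bar{a}\}\cup B_0$ translates into $\mathrm{acl}(\bar{a},\mathcal{M}^\circ)=\{\bar{a}\}$ for every $\bar{a}$. Hence \cite{AFP} supplies a $\mathrm{Sym}(\mathbb{N}\setminus B_0)$-invariant probability measure $\mu^\circ$ concentrated on $[\mathcal{M}^\circ]_\mathrm{iso}$.

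To lift $\mu^\circ$ to $[\mathcal{M}]_\mathrm{iso}$, I would randomize the location of $B_0$. Fix a probability measure $\nu$ on $\mathbb{N}$ with full support (say $\nu(n)=2^{-n-1}$) and let $\rho$ be $\nu^{\otimes k}$ conditioned on the coordinates being distinct; this is a $\mathrm{Sym}(\mathbb{N})$-quasi-invariant probability measure on $\mathrm{Inj}(B_0,\mathbb{N})$ with explicit Radon--Nikodym cocycle continuous in $g\in\mathrm{Sym}(\mathbb{N})$. Build a measurable decoding $\Phi\colon \mathrm{Inj}(B_0,\mathbb{N})\times[\mathcal{M}^\circ]_\mathrm{iso}\to[\mathcal{M}]_\mathrm{iso}$ which, given $(\iota,\mathcal{N}^\circ)$, places a copy of $\mathcal{M}\upharpoonright B_0$ at $\iota(B_0)$ and transports the remainder of $\mathcal{N}^\circ$ from $\mathbb{N}\setminus B_0$ onto $\mathbb{N}\setminus \iota(B_0)$ via the canonical order-preserving bijection; then set $\hat{\mu}:=\Phi_*(\rho\otimes\mu^\circ)$. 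Under the logic action of $g\in\mathrm{Sym}(\mathbb{N})$, the push-forward factorizes: $\iota$ shifts to $g\circ\iota$ (contributing the Radon--Nikodym factor from $\rho$), while the non-core part is permuted by an element of $\mathrm{Sym}(\mathbb{N}\setminus B_0)$, under which $\mu^\circ$ is invariant. Quasi-invariance of $\hat\mu$ and continuity of its cocycle $c_{\hat\mu}(-,\mathcal{N})$ for each fixed $\mathcal{N}$ then follow from the corresponding properties of $\rho$, yielding also Theorem~\ref{T:main2}.

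The main technical obstacle I anticipate is making the last decomposition precise: given $g$ and $\iota$, the auxiliary bijection $\mathbb{N}\setminus B_0\to\mathbb{N}\setminus\iota(B_0)$ used by $\Phi$ interacts with $g$ in a slightly subtle way, and some care is needed to show that the induced ``non-core'' permutation really lies in $\mathrm{Sym}(\mathbb{N}\setminus B_0)$ and that the resulting Radon--Nikodym derivative depends only on the $\rho$-factor. A cleaner formulation integrating directly over $\mathrm{Sym}(\mathbb{N})$ against a measure that projects to $\rho$ under the quotient $\mathrm{Sym}(\mathbb{N})\to\mathrm{Sym}(\mathbb{N})/\mathrm{Sym}(\mathbb{N}\setminus B_0)\cong\mathrm{Inj}(B_0,\mathbb{N})$ would likely sidestep these issues at the cost of extra notation.
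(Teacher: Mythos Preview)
Your proposal is correct and follows essentially the same strategy as the paper: expand the language so that the structure on $\mathbb{N}\setminus B$ records all interactions with $B$ and thereby has no algebraicity, invoke \cite{AFP} for an invariant measure there, and then randomize the placement of $B$ via a fully supported measure on the set of possible locations before pushing forward to $[\mathcal{M}]_{\mathrm{iso}}$.

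The one packaging difference worth noting is that the paper, rather than fixing an order-preserving bijection $\mathbb{N}\setminus B_0\to\mathbb{N}\setminus\iota(B_0)$ and working on the product $\mathrm{Inj}(B_0,\mathbb{N})\times[\mathcal{M}^\circ]_{\mathrm{iso}}$, lets the language vary with the placement $\bar{c}$, works on the single $\mathrm{Sym}(\mathbb{N})$-orbit $Y$ of the pair $(\bar{b},\mathcal{M}_{\bar{b}})$, and defines the measure on each fiber as $\mu_{\bar{c}}:=g_*\mu_{\bar{b}}$ for \emph{any} $g$ sending $\bar{b}$ to $\bar{c}$---this is independent of the choice of $g$ precisely by the $\mathrm{Sym}(\mathbb{N}\setminus\{\bar{b}\})$-invariance of $\mu_{\bar{b}}$. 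That formulation makes the equivariance of the decoding map $\beta\colon Y\to[\mathcal{M}]_{\mathrm{iso}}$ a direct computation and sidesteps exactly the bookkeeping you flag as the ``main technical obstacle''; your version with a fixed section is equivalent but requires verifying that the induced non-core permutation lies in $\mathrm{Sym}(\mathbb{N}\setminus B_0)$, which does go through. The paper also does not shrink $B$ to an $\mathrm{Aut}(\mathcal{M})$-invariant subset---that step is harmless but unnecessary, since $\bar{b}$ is treated as an ordered tuple throughout.
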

\begin{proof}

We may assume without loss of generality that  $\mathcal{L}$ only contains relation symbols. Indeed,  if $\mathcal{L}$ contained function symbols, then we can always move a
the language $\Ell'$ which contains all relational symbol of $\Ell$; and for each $f\in \Ell$ function symbol of arity $r$, $\Ell'$ contains a relation symbol $R_f$ of arity $r+1$. Similarly, from $\mathcal{M}$, move to the $\Ell'$-structure $\mathcal{M}'$, with: $\mathrm{dom}(\mathcal{M})=\N=\mathrm{dom}(\mathcal{M}')$;  
relations from $\mathcal{L}$ are interpreted as in $\mathcal{M}$; and for each function symbol $f$ from $\mathcal{L}$, set $R_f^{\mathcal{M}'}(\bar x, a) \Leftrightarrow f^{\mathcal{M}}(\bar x) =a$.  
Notice that $\mathcal{M}$ and $\mathcal{M}'$ have  the same automorphism group (as permutation groups). In particular, $\mathcal{M}'$ is highly algebraic iff $\mathcal{M}$ is highly algebraic. Moreover this construction induces an $S_{\infty}$-equivariant  Borel isomorphism between  $[\mathcal{M}]_{\mathrm{iso}}\subseteq\mathrm{Str}(\mathcal{L},\N)$ and $[\mathcal{M}']_{\mathrm{iso}}\subseteq\mathrm{Str}(\mathcal{L}',\N)$, the existence of a quasi-invariant probability measure on $[\mathcal{M}']_{\mathrm{iso}}$ implies the same for $[\mathcal{M}]_{\mathrm{iso}}$.

Let now  $B$  as in the statement of Theorem \ref{T:main2N}. We view the set $B$ as enumerated  by $\bar{b}=(b_0,\ldots,b_{l-1})\in \N^{\ell}$, with $|B|=\ell$, and we fix some new ``atom" $z\notin \N$.  

For each $\bar{c}\in \mathbb{N}^{\ell}$  we  consider a new language $\mathcal{L}_{\bar{c}}$. This language  contains, for each  $R\in \mathcal{L}$ of arity $n$ and each  $f\colon \{0,\ldots,n-1\}\to \{\bar{c}\}\cup \{z\}$,   a symbol $R^{\bar{c}}_f$ of arity $|f^{-1}(z)|$. In addition, given any tuple $\bar{a}\in (\N\setminus \{\bar{c}\})^k$ and any map $f\colon \{0,\ldots,n-1\}\to \{\bar{c}\}\cup \{z\}$ such that $|f^{-1}(z)|=k$, we define $\bar{a}_f\in \N^{n}$, as the unique tuple such that:
\begin{itemize}
    \item[$i)$]  for each $i\in n$, with $f(i)\in \{\bar{c}\}$,  the $i$-th coordinate of $\bar{a}_f$ is $f(i)$,
    \item[$ii)$] removing all entries of $\bar{a}_f$ that are in $\{\bar{c}\}$ gives $\bar{a}$.
\end{itemize}

For any $\mathcal{N}\in  \mathrm{Str}(\mathcal{L},\N)$ and any fixed $\bar{c}\in \mathbb{N}^{\ell}$, we define $\mathcal{N}_{\bar{c}}\in \mathrm{Str}(\mathcal{L}_{\bar{c}},\N\setminus \{\bar{c}\})$  by setting
\[\mathcal{N}_{\bar{c}}\models R^{\bar{c}}_f(\bar{a}) \iff  \mathcal{N}\models R(\bar{a}_f),\]
for every  $R^{\bar{c}}_f\in \mathcal{L}_{\bar{c}}$. Notice that there is a natural isomorphism of the permutation groups   $\mathrm{Aut}(\mathcal{N}_{\bar{c}})$  and  $\mathrm{Aut}(\mathcal{N})_{\bar{c}}$, defined by extending each $g\in\mathrm{Aut}(\mathcal{N}_{\bar{c}})$ to the identity on $\{\bar{c}\}$.

\begin{claim}\label{Claim:NoAlg}
The $\mathcal{L}_{\bar{b}}$-structure   $\mathcal{M}_{\bar{b}}$ has no algebraicity.
\end{claim}
\begin{proof}[Proof of Claim]
Assume that  $d\in \N\setminus \{\bar{b}\}$ has finite orbit under $\mathrm{Aut}(\mathcal{M}_{\bar{b}})_{\bar{a}}$. Then $d$ also has finite orbit under $\mathrm{Aut}(\mathcal{M})_{\bar{a}\bar{b}}$, since  $\mathrm{Aut}(\mathcal{M}_{\bar{b}})_{\bar{a}}$ and  $\mathrm{Aut}(\mathcal{M})_{\bar{a}\bar{b}}$ are the same permutation groups on $\N \setminus \{\bar{b}\}$. Therefore $d$ must be in $\{\bar{a}\}\cup \{\bar{b}\}$. Since  $d\not\in \{\bar{b}\}$, we have $d\in\{\bar{a}\}$.
\end{proof}

Let now $W$ be the collection of all pairs $(\bar{c},\mathcal{N})$,  where $\bar{c}\in \N^{\ell}$ and $\mathcal{N}\in \mathrm{Str}(\mathcal{L}_{\bar{c}},\N \setminus \{\bar{c}\})$.
 We define an analog of the logic action of $\mathrm{Sym}(\N)$ on $W$ as follows: for $g\in \mathrm{Sym}(\N)$ and $(\bar{c},\mathcal{N})\in W$  we set $g\cdot(\bar{c},\mathcal{N}):=(g(\bar{c}),g_*\mathcal{N})$, where  $g_*\mathcal{N}\in \mathrm{Str}(\mathcal{L}_{g(\bar{c})},\N\setminus\{ g(\bar{c})\})$ is defined by naturally extending the formulas from Section \ref{SS:Background:2}. That is, by 
 setting, for all $R^{\bar{c}}_f\in \mathcal{L}_{\bar{c}}$,
 \[g_*\mathcal{N}\models R^{g(\bar{c})}_{g\circ f}(\bar{a})\iff \mathcal{N}\models R^{\bar{c}}_f(g^{-1}(\bar{a}))\]

Let   $Y$  be the $\mathrm{Sym}(\N)$-orbit of $(\bar{b},\mathcal{M}_{\bar{b}})$ in $W$ and consider the map $\beta \colon Y \to  \mathrm{Str}(\mathcal{L},\N)$ which is defined as follows:  for all $(\bar{c},\mathcal{N})\in Y$ and $R\in\mathcal{L}$, ${\beta(\bar{c},\mathcal{N})}\models R(\bar{a})$ holds if and only if ${\mathcal{N}}\models R^{\bar{c}}_{f}(\bar{e})$ holds, for the unique   $f\colon n \to \{\bar{c}\}\cup \{z\}$ and  $\bar{e}\in (\N \setminus \{\bar{c}\})^k$ such that $\bar{e}_f=\bar{a}$.

\begin{claim}
    $\beta$ is  $S_\infty$-equivariant.
\end{claim}

\begin{proof}[Proof of Claim]
Let $g\in \mathrm{Sym}(\N)$ and $(\bar{c},\mathcal{N})\in Y$, we need to prove that
\[\beta(g (\bar{c}), g_{*}\mathcal{N})= g_* \beta(\bar{c},\mathcal{N}).\]  
To see this, let $R\in \mathcal L$ and  $\bar{a}$ in $\mathbb{N}$. We have:
\begin{align*}
      \beta(g (\bar{c}), g_{*}\mathcal{N}) \models R(\bar{a}) \quad \Leftrightarrow \quad &      {g_{*}\mathcal{N}} \models R^{g(\bar{c})}_{f}(\bar{e}), \text{for the unique }f\colon n\to \{g(\bar{c})\}\cup \{z\}\\ & \text{ and  }
      \bar{e}\in (\N \setminus \{g  (\bar{c})\})^k\text{ such that }\bar{e}_f=\bar{a}.\\
 \quad \Leftrightarrow \quad &      {\mathcal{N}} \models R^{\bar{c}}_{g^{-1}\circ f}(g^{-1}(\bar{e})), \text{for the unique }f\colon n\to \{g(\bar{c})\}\cup \{z\}\\ & \text{ and  }
      \bar{e}\in (\N \setminus \{g  (\bar{c})\})^k\text{ such that }\bar{e}_f=\bar{a}.\\
 \quad \Leftrightarrow \quad &      {\mathcal{N}}\models R^{\bar{c}}_{h}(\bar{d}), \text{for the unique }h\colon n\to \{\bar{c}\}\cup \{z\}\\ & \text{ and  }
      \bar{d}\in (\N \setminus \{\bar{c}\})^k\text{ such that }g(\bar{d}_h)=\bar{a}, \\
      & \text{ namely,  for } h:=g^{-1}\circ f \text{ and } \bar{d}:=g^{-1}(\bar{e}). \\ 
 \quad \Leftrightarrow \quad &      {\mathcal{N}}\models R^{\bar{c}}_{f}(\bar{e}), \text{for the unique } f \colon n\to \{\bar{c}\}\cup \{z\}\\ & \text{ and  }
      \bar{e}\in (\N \setminus \{\bar{c}\})^k\text{ such that } \bar{e}_f=g^{-1}(\bar{a}), \\
   \quad \Leftrightarrow \quad &  \beta(\bar{c},\mathcal{N}) \models R(g^{-1}(\bar{a}))\\
  \quad \Leftrightarrow \quad &   {g_* \beta(\bar{c},\mathcal{N})}\models R(\bar{a}).
\end{align*}
\end{proof}

\begin{claim}\label{Claim:Isom}
  $\beta(Y)= [\mathcal{M}]_{\mathrm{iso}}$
\end{claim}
\begin{proof}
This follows from the previous claim, since $\mathcal{M}=\beta(\mathcal{M}_{\bar{b}},\bar{b})$.
\end{proof}

\begin{claim}\label{Claim:Borel}
 $Y$ is a standard Borel space and  $\beta$ is a Borel map. \color{black}
\end{claim}
\begin{proof}
Notice first that $W$ is a Polish space as it is the countable disjoint union of the Polish spaces $\mathrm{Str}(\mathcal{L}_{\bar{c}},\N\setminus \{\bar{c}\})$, where the union ranges over all $\bar{c}\in \N^\ell$. Since $Y$ is a single orbit of the continuous action of $\mathrm{Sym}(\N)$ on $W$ we have that $Y$ is a Borel subset of $W$, and hence, a standard Borel space. It is straightforward to check that the map $\beta$ is, in fact, continuous, as whether ${\beta(\bar{c},\mathcal{N})}\models R(\bar{a})$ holds depends only on whether  ${\mathcal{N}}\models R^{\bar{c}}_{f}(\bar{e})$ holds, for a single relation  $R^{\bar{c}}_{f}$ and a single tuple $\bar{e}$.
\end{proof}

\begin{claim}
    There is a measure $\lambda$ on $Y$ that is $\mathrm{Sym}(\N)$-quasi--invariant.
\end{claim}

\begin{proof}
    First, using  \cite{AFP}*{Theorem 1.1} and Claim \ref{Claim:NoAlg}, we may fix some  $\mathrm{Sym}(\N\setminus \{\bar{b}\})$-invariant measure  $\mu_{\bar{b}}$ on $\mathrm{Str}(\mathcal{L}_{\bar b},\N \setminus \{\bar{b}\})$, which concentrates on the orbit of $\mathcal{M}_{\bar{b}}$. We now define a measure on $\mathrm{Str}(\mathcal{L}_{\bar{c}},\N \setminus \{\bar{c}\})$ by taking any element $g$ of $\mathrm{Sym}(\N)$ sending $\bar b$ to $\bar c$ and setting $\mu_{\bar{c}}=g_*\mu_{\bar b}$. The obtained measure is $\mathrm{Sym}(\N\setminus \{\bar{c}\})$-invariant and does not depend on the choice of $g$ by $\mathrm{Sym}(\N\setminus \{\bar{b}\} )$-invariance of $\mu_{\bar b}$. Denote by $\delta_{\bar c}$ the measure on $\N^\ell$ such that for $A\subseteq \N^\ell$ , $\delta_{\bar{c}}(A)=1$ if $\bar{c}\in A$  and $0$ otherwise. Now take any fully supported measure $\nu$ on the countable set $\N^\ell$ and integrate $\delta_{\bar c} \otimes \mu_{\bar c}$ along $\nu$, since $\N^{\ell}$ is countable this is always well-defined. We denote by $\lambda $ the obtained measure on $Y$, as for every $\bar c$, $\delta_{\bar c} \otimes \mu_{\bar c}$ is a measure on $Y$.

To see that $\lambda$ is  $\mathrm{Sym}(\N)$-quasi--invariant, let $A\subseteq Y$ with $\lambda(A)>0$, and pick  $\bar{d}\in \N^\ell$ with
\[\lambda\big( \{(\bar{c},\mathcal{N})\in A \colon \bar{c}=\bar{d} \} \big)>0. \]
This implies that 
\[\mu_{\bar{d}}\big(\{\mathcal{N} \in \mathrm{Str}(\mathcal{L}_{\bar d},\N \setminus \{\bar{d}\}) \colon (\bar{d},\mathcal{N})\in A \} \big)>0.\]
Let now $g\in \mathrm{Sym}(\N)$ and notice that, by the definition $\mu_{\bar{c}}$, we have $\mu_{g(\bar{d})}=g_*\mu_{\bar{d}}$. Hence,
\[\mu_{g(\bar{d})}\big(\{g_*\mathcal{N} \in \mathrm{Str}(\mathcal{L}_{g(\bar d)},\N \setminus \{g(\bar{d})\}) \colon (\bar{d},\mathcal{N})\in A \} \big)>0.\]
Since $\nu$ is fully supported, we have that $\lambda(g(A))\geq \lambda\big( \{(\bar{c},\mathcal{N})\in g(A) \colon \bar{c}=g(\bar{d}) \} \big)>0$.

\end{proof}

To conclude with the proof of Theorem \ref{T:main2N}, we simply push forward  $\lambda$ via $\beta$. By  Claim \ref{Claim:Isom} and Claim \ref{Claim:Borel} we have an $\mathrm{Sym}(\mathbb{N})$-quasi--invariant measure on $[\mathcal{M}]_{\mathrm{iso}}$ which can be normalized to a probability measure without losing $\mathrm{Sym}(\mathbb{N})$-quasi--invariance.
\end{proof}

\section{Examples}

We start by providing two examples of simple, highly algebraic structures. Each  example  realizes the respective side of the dichotomy that we established in Lemma \ref{L:aris1.5}.

\begin{example}\label{Ex:1}
Let $\mathcal{L}:=\{R\}$, where $R$ is a binary relation, and let $\mathcal{N}$ be a graph that comprises of the disjoint union of countably infinite many edges. For example, set $\mathcal{N}=(\mathbb{N},R^{\mathcal{N}})$, where for all $n,m\in\mathbb{N}$ we have that $(n,m)\in R^{\mathcal{N}}$ if and only if there is  $k\in\mathbb{N}$ so that $\{n,m\}=\{2k,2k+1\}$. Clearly $\mathcal{N}$ is highly algebraic and satisfies  (1) of Lemma \ref{L:aris1.5}. 
\end{example}

\begin{example}\label{Ex:2}
Let $\mathcal{L}:=\{R\}$, where $R$ is a binary relation, and let $\mathcal{N}$ be a graph that comprises of the disjoint union of countably many copies of a bipartite  graph whose one part is countably infinite and the other is a singleton.  For example, set $\mathcal{N}=(\mathbb{N}^2,R^{\mathcal{N}})$, where $\big((n_0,n_1),(m_0,m_1)\big)\in R^{\mathcal{N}}$ if and only if   $m_0=n_0$ and either of the following two hold: $n_1=0$ and $m_1>0$; or $m_1=0$ and $n_1>0$.  Clearly $\mathcal{N}$ is highly algebraic and (its $R$-homogeneous expansion from Lemma \ref{L:Ultrahomogeneous}) satisfies  (2) of Lemma \ref{L:aris1.5}. 
\end{example}

We close with some examples of structures which have algebraicity but they are not highly algebraic. In view of  \cite{AFP} and Theorem \ref{T:main}, these are quasi-random structures which are not random.

\begin{example}[Finite Inclusions]
Let $\mathcal{N}$ be any ultrahomogeneous structure without algebraicity and let $F$ be a finite substructure of it. Then the structure $(\mathcal{N},U_F)$, which endows $\mathcal{N}$  with a unary predicate interpreted on $F$, is not highly-algebraic and hence, by Theorem \ref{T:main}, quasi-random. 
\end{example}

\begin{example}[Generic Finite Quotients]
Let $\mathcal{N}$ be any ultrahomogeneous structure without algebraicity and let $\pi\colon \mathrm{dom}(N)\to B$ be a generic function with $B$ being a  finite set. Consider the structure $\mathcal{N}_{B}$ given by setting   $\mathrm{dom}(\mathcal{N}_{B}):=\mathrm{dom}(\mathcal{N})\cup B$ and adding  the map $\pi$, on top of the existing on $\mathcal{N}$,  as the interpretation of a new function symbol. It is not difficult to check that  $\mathcal{N}_B$ is not highly-algebraic and hence, by Theorem \ref{T:main}, quasi-random. 
\end{example}

\begin{bibdiv}
\begin{biblist}[\normalsize]

\bib{AFP}{article}
{
author = {Ackerman, N.},
author = {Freer, C.},
author = {Patel, R.},
title = {Invariant measures concentrated on countable structures},
volume={4},
DOI={10.1017/fms.2016.15}, 
journal={Forum of Mathematics, Sigma}, 
publisher={Cambridge University Press}, 
year={2016}
}

\bib{AFKrucmanP}{article}{
      title={Properly ergodic structures}, 
       AUTHOR = {Ackerman, N.},
    Author={Freer, C.},
    Author={Kruckman,
              A.},
              Author={Patel, R.},
              year={2017},
              journal={Preprint}
              }

\bib {AFKwiatowskaP}{article}{
    AUTHOR = {Ackerman, N.},
    Author={Freer, C.},
    Author={Kwiatkowska,
              A.},
              Author={Patel, R.},
     TITLE = {A classification of orbits admitting a unique invariant
              measure},
   JOURNAL = {Ann. Pure Appl. Logic},
    VOLUME = {168},
      YEAR = {2017},
    NUMBER = {1},
     PAGES = {19--36}
}

\bib{BK96}{book}
{
author = {Becker, H.},
author = {Kechris, A.S.},
title = {The descriptive set theory of Polish group actions},
series = {Lecture Note Series},
volume = {232},
publisher = {London Mathematical Society},
place ={Cambridge},
year = {1996},
}

\bib{Cam}{book}
    {AUTHOR = {Cameron, P. J.},
     TITLE = {Permutation groups},
 PUBLISHER = {Cambridge University Press},
      YEAR = {1999},
       DOI = {https://doi.org/10.1017/CBO9780511623677},
}

\bib{Kechris}{book}{
  title={Classical descriptive set theory},
  author={Kechris, A. S. },
  volume={156},
  year={2012},
  publisher={Springer Science \& Business Media}
}

\bib{CraneTowsner}{article}{
    AUTHOR = {Crane, H.},
    Author={Towsner, H.},
     TITLE = {Relatively exchangeable structures},
   JOURNAL = {J. Symb. Log.},
    VOLUME = {83},
      YEAR = {2018},
    NUMBER = {2},
     PAGES = {416--442}}

\bib{Hodges}{book}
{
author = {Hodges, W.},
title = {Model theory},
series = {Encyclopedia of Mathematics and its Applications},
publisher = {Cambridge University Press},
year = {1993},
volume = {42}
}

\bib{JahelJoseph}{article}
      {title={Stabilizers for ergodic actions and invariant random expansions of non-archimedean Polish groups}, 
      author={Jahel, C.},
      author= {Joseph, M.},
      year={2023},
       journal={Preprint}
       }

\bib{Nessonov}{article}{
    AUTHOR = {Nessonov, N.},
     TITLE = {A nonsingular action of the full symmetric group admits an
              equivalent invariant measure},
   JOURNAL = {J. Math. Phys. Anal. Geom.},
    VOLUME = {16},
      YEAR = {2020},
    NUMBER = {1},
     PAGES = {46--54},
      ISSN = {1812-9471,1817-5805},
       DOI = {10.15407/mag16.01.046},
       URL = {https://doi.org/10.15407/mag16.01.046},
}

\bib{neumann1976structure}{article}
{
author = {Neumann, P.M.},
title = {The structure of finitary permutation groups},
journal = {Archiv der Mathematik},
year = {1976},
pages = {3--17},
volume = {27},
number = {1}
}

\bib{PV}{article}{
    AUTHOR = {Petrov, F. },
    Author = {Vershik, A.},
     TITLE = {Uncountable graphs and invariant measures on the set of
              universal countable graphs},
   JOURNAL = {Random Structures Algorithms},
    VOLUME = {37},
      YEAR = {2010},
    NUMBER = {3},
     PAGES = {389--406},
       DOI = {10.1002/rsa.20309},
       URL = {https://doi.org/10.1002/rsa.20309},
}

\end{biblist}
\end{bibdiv}

\end{document}